\documentclass[11pt]{amsart}
\usepackage[margin=1in]{geometry}

\usepackage{amssymb}
\usepackage{amsthm}
\usepackage{amsmath}
\usepackage{mathrsfs}
\usepackage{amsbsy}
\usepackage[all]{xy}
\usepackage{bm}
\usepackage{hyperref}
\usepackage{tikz}
\usepackage{array}
\usepackage{float}
\usepackage{enumerate}
\usepackage{xcolor}
\usepackage{hhline}
\usepackage{mathtools}
\setlength{\parskip}{0em}
\allowdisplaybreaks
\usepackage[noadjust]{cite}

\usepackage{caption}
\usepackage{subcaption}
\usepackage{diagbox}
\usepackage{tikz}
\usepackage{bbm}
\usepackage{booktabs}

\usepackage[noabbrev,capitalise]{cleveref}

\newenvironment{enumerate*}%
{\begin{enumerate}[(I)]%
\setlength{\itemsep}{10pt}%
\setlength{\parskip}{0pt}}%
{\end{enumerate}}

\newtheorem{theorem}{Theorem}[section]
\newtheorem{proposition}[theorem]{Proposition}

\newtheorem{conjecture}[theorem]{Conjecture}

\newtheorem{lemma}[theorem]{Lemma}

\theoremstyle{definition}

\newtheorem{remark}[theorem]{Remark}

\newcommand{\bp}{\mathbf{p}}

\newcommand{\bn}{\mathbf{n}}

\newcommand{\bx}{\mathbf{x}}
\newcommand{\bs}{\mathbf{s}}
\DeclareMathOperator{\Span}{span}
\DeclareMathOperator{\IS}{IS}

\newenvironment{mythm}[1]
  {\innercustomthm}
  {\endinnercustomthm}

\title{Graham's rearrangement for a class of\\semidirect products}

\author[]{Simone Costa}
\address[Simone Costa]{DICATAM, Universit\`a degli Studi di Brescia, Via Branze~43, I~25123 Brescia, Italy}
\email{simone.costa@unibs.it}
\author[]{Stefano Della Fiore}
\address[Stefano Della Fiore]{DII, Universit\`a degli Studi di Brescia, Via Branze 43, 25123 Brescia, Italy}
\email{stefano.dellafiore@unibs.it}
\author[]{Eva R. Engel}
\address[Eva R. Engel]{Princeton University, Dept. of Mathematics, Class of 2027, Princeton, NJ 08544, USA}
\email{eva.engel@princeton.edu}
\keywords{Sequenceability,  Rectification}
\subjclass{11B75}

\begin{document}

\begin{abstract}
A famous conjecture of Graham asserts that every set $A \subseteq \mathbb{Z}_p \setminus \{0\}$ can be ordered so that all partial sums are distinct. Bedert and Kravitz proved in \cite{BK} that this statement holds whenever $|A| \leq e^{c(\log p)^{1/4}}$.

In this paper,  we will use a similar procedure to obtain an upper bound of the same type in the case of semidirect products $\mathbb{Z}_p \rtimes_{\varphi} H$ where $\varphi: H \to Aut(\mathbb{Z}_p)$ satisfies $\varphi(h) \in \{id, -id\}$ for each $h \in H$ and where $H$ is abelian and each subset of $H$ can be ordered such that all of its partial products are distinct.
\end{abstract}

\maketitle

\section{Introduction}
Let $A$ be a finite subset of group $(G,\cdot)$.  We say that an ordering $a_1, \ldots, a_{|A|}$ of $A$ is \emph{valid} if the partial products (or partial sums in additive notation) $p_1=a_1, p_2=a_1\cdot a_2, \ldots, p_{|A|}=a_1\cdot a_2\cdot\cdots\cdot a_{|A|}$ are all distinct. Moreover, this ordering is a \emph{sequencing} if it is valid and $p_i\not=id$ for any $i\in [1,|A|-1]$. In this case, we say that $A$ is \emph{sequenceable}. We also say that a group $G$ is \emph{strongly sequenceable} if all the subsets of $G \setminus \{id\}$ are sequenceable. In the literature, there are several conjectures about valid orderings and sequenceability. We refer to  \cite{CMPP18, OllisSurvey, PD} for an overview of the topic, \cite{AKP, AL20, ADMS16, CDOR} for lists of related conjectures, and \cite{BFMPY} for a treatment using rainbow paths. Here, we explicitly recall only the conjecture of Graham.

\begin{conjecture}[\cite{GR} and \cite{EG}]\label{conj:main}
Let $p$ be a prime.  Then every subset $A \subseteq \mathbb{Z}_p \setminus\{0\}$ has a valid ordering.
\end{conjecture}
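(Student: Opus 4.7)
Since Graham's conjecture has been open since 1971 and the best known result only reaches $|A| \le e^{c(\log p)^{1/4}}$, any plan must identify the place where a fundamentally new input will be required. My plan is to attack the full conjecture via a polynomial-method / non-vanishing approach: encode validity of an ordering as the non-vanishing of an explicit polynomial over $\mathbb{F}_p$, then establish non-vanishing by a single coefficient computation via Alon's Combinatorial Nullstellensatz, with a Freiman-type dichotomy to absorb the structured case.

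Concretely, given $A = \{a_1, \ldots, a_n\} \subseteq \mathbb{F}_p \setminus \{0\}$, set
\[
P_A(y_1, \ldots, y_n) \;=\; \prod_{1 \le i < j \le n} \bigl(y_{i+1} + y_{i+2} + \cdots + y_j\bigr) \;\in\; \mathbb{F}_p[y_1, \ldots, y_n].
\]
An ordering $\sigma \in S_n$ is valid precisely when $P_A(a_{\sigma(1)}, \ldots, a_{\sigma(n)}) \neq 0$. Writing $V(y) = \prod_{i<j}(y_i - y_j)$ for the Vandermonde, the product $P_A \cdot V$ has total degree $2\binom{n}{2} = n(n-1)$, which matches the Combinatorial Nullstellensatz threshold on the grid $A^n$ exactly: if the coefficient of the top monomial $y_1^{n-1} y_2^{n-1} \cdots y_n^{n-1}$ in $P_A \cdot V$ is nonzero modulo $p$, then there exist $y_i \in A$ with $(P_A \cdot V)(y) \neq 0$; non-vanishing of $V$ forces the $y_i$ distinct and hence a permutation of $A$, and that permutation is valid. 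So the conjecture reduces to proving that this single coefficient is nonzero in $\mathbb{F}_p$ for every $A \subseteq \mathbb{F}_p \setminus \{0\}$.

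The hard part, and the obstruction that has blocked all prior work including Bedert--Kravitz, is precisely this coefficient computation. It evaluates to an alternating sum indexed by data of the form $(\pi, \{(i_k < j_k)\}_{k})$ with intricate dependence on the additive combinatorics of $A$, and no current technique controls its mod-$p$ vanishing uniformly in $A$ and $p$. I would approach the two regimes of the dichotomy separately: for $A$ of very small doubling, contained in a short arithmetic progression, I would invoke direct constructive methods in the sequencing literature (for instance those of Hicks--Ollis--Schmitt type) to produce a valid ordering by hand; for $A$ without such structure, I would attempt to evaluate the coefficient via an analog of the Dyson / Aomoto constant-term identities adapted to the partial-sum polynomial $P_A$. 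I do not see a path that works for arbitrary $A$, and this is precisely where genuinely new algebraic input would be required --- which is why Graham's conjecture has remained open for over fifty years.
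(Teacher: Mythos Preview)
The statement you are addressing is Conjecture~1.1 in the paper, not a theorem; the paper does not claim or give a proof of it, citing it explicitly as open with the Bedert--Kravitz bound $|A|\le e^{c(\log p)^{1/4}}$ as the state of the art. So there is no ``paper's own proof'' to compare against, and your closing admission that you do not see a path to the full conjecture is the appropriate conclusion.

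That said, your setup contains a conceptual slip that dissolves the dichotomy you propose. The polynomial you denote $P_A$ is
\[
P_n(y_1,\ldots,y_n)=\prod_{1\le i<j\le n}(y_{i+1}+\cdots+y_j),
\]
a polynomial in the formal variables $y_i$ with integer coefficients; it does not depend on $A$ at all. Consequently the coefficient of $y_1^{n-1}\cdots y_n^{n-1}$ in $P_n\cdot V$ is a single integer $c_n$ determined solely by $n$, and the Combinatorial Nullstellensatz step succeeds for \emph{every} $A\subseteq\mathbb{F}_p\setminus\{0\}$ of size $n$ exactly when $p\nmid c_n$. There is no ``intricate dependence on the additive combinatorics of $A$'', and so your Freiman-type split (small doubling versus unstructured $A$) has nothing to act on: either $p\nmid c_n$ and you win uniformly in $A$, or $p\mid c_n$ and the method is silent for every $A$ of that size. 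This reduction is in fact well known---it is the polynomial-method approach already in the sequencing literature you cite (Hicks--Ollis--Schmitt and predecessors)---and the genuine obstruction is that $c_n$ may vanish or have large prime factors, so for a fixed $p$ there can be many $n<p$ with $p\mid c_n$. Handling those residual pairs $(n,p)$ is a problem about the arithmetic of $c_n$, not about the structure of $A$, so the structural dichotomy you outline does not attach to the actual failure mode of the method.
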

Until recently, the main results on this conjecture were for small values of $|A|$; in particular, the conjecture was proved for sets $A$ of size at most $12$ \cite{CDOR}.

The first result involving arbitrarily large sets $A$ was presented last year by Kravitz~\cite{NK}. He used a simple rectification argument to show that Graham's conjecture holds for all sets $A$ of size at most $\log p/\log\log p$. A similar argument was also proposed (but not published) by Sawin \cite{Sawin} in a 2015 MathOverflow post. 

Then, in \cite{BK}, Bedert and Kravitz improved this upper bound to the following:
\begin{theorem}[\cite{BK}]\label{thm:mainBK}
Let $c>0$ be a constant and $p$ be a large enough prime. Then every subset $A \subseteq \mathbb{Z}_p \setminus\{0\}$ is sequenceable provided that
$$ |A| \leq e^{c(\log p)^{1/4}}$$
\end{theorem}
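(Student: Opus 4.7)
The plan is to build on Kravitz's rectification argument by combining it with additive-combinatorial structure theorems, and I would split the analysis into two regimes governed by the Freiman-type structure of $A$. In the \emph{structured regime}, $A$ is Freiman-isomorphic to a subset of a short integer interval $[-N, N]$ with $N$ much smaller than $p$. Here one transports the sequencing problem from $\mathbb{F}_p$ to $\mathbb{Z}$, where partial sums are honest integers and a direct greedy procedure (for example, always choosing the next element so as to keep the running sum inside a controlled window) produces a valid ordering. This extends Kravitz's earlier argument and, via Freiman's theorem, also handles sets contained in low-rank generalized arithmetic progressions.

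In the \emph{unstructured regime}, $A$ has large doubling, so by the Pl\"unnecke--Ruzsa inequality its iterated sumsets $kA$ saturate large portions of $\mathbb{F}_p$ for modest $k$. I would exploit this dispersion through an inductive insertion argument. Suppose we have a valid ordering of $A \setminus \{a\}$ with partial sums $p_0 = 0, p_1, \ldots, p_{n-1}$; inserting $a$ after position $i$ produces the new sums $p_i + a, p_{i+1} + a, \ldots, p_{n-1} + a$, and validity demands that these avoid the prefix sums $p_0, \ldots, p_i$. The number of positions $i$ that fail is controlled by the count of representations of $a$ as a difference $p_l - p_m$, which is in turn governed by the additive structure of the partial-sum set.

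The main obstacle I expect is bridging the two regimes: sets with intermediate doubling are neither rectifiable nor dispersed enough for a naive counting bound. The natural route is to iterate a structural dichotomy several times, peeling off a rectifiable core, analysing the residual set, and recursing, with the partial orderings already constructed used as hypotheses for the next layer. Each layer should multiply the admissible size by a factor of roughly $(\log p)^{O(1)}$, and four such layers produce the bound $|A| \leqslant e^{c(\log p)^{1/4}}$, with the exponent $1/4$ reflecting the depth of the iteration rather than a single threshold condition. The delicate point is ensuring that the invariants tracked (valid ordering plus enough residual flexibility for the next layer) are simultaneously preserved at every level, which is where the careful bookkeeping of the Bedert--Kravitz argument lives.
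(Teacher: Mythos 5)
Your plan diverges from the Bedert--Kravitz argument in its central decomposition and in the mechanism that handles the ``unstructured'' part, and I don't think it closes. Their proof does not run a Freiman-type dichotomy on the doubling of $A$. Instead, it decomposes $A = E \cup D_1 \cup \cdots \cup D_s$ where each $D_j$ is a \emph{dissociated} set of size $\Theta(R)$, with $R \asymp \max\bigl((\log p)^{1/2}, \log p / \log|A|\bigr)$, and $E$ is the leftover of dimension $< R$. The only rectification step is applied to $E$ (small dimension forces rectifiability; this is Lemma 3.4 here), not to a Freiman-structured regime of $A$ itself. Dissociation, not large doubling, is the notion of ``spread'' that is actually used: products over random subsets of a dissociated set are nearly uniform, which is what powers the probabilistic step, and this does not follow from, nor is it replaced by, Pl\"unnecke--Ruzsa saturation of $kA$.

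The inductive insertion argument you sketch for the large-doubling regime has a circularity problem: you assume a valid ordering of $A \setminus \{a\}$ as the induction hypothesis, but producing such an ordering is exactly the problem, and the admissible size drops by only one element per insertion, so this never escapes the $\log p / \log\log p$ range on its own. What actually replaces it in the real proof is a two-stage \emph{probabilistic} ordering of the dissociated blocks (Lemmas 5.1--6.3 of their paper, Lemmas 5.2--5.6 here): randomly split each $D_j$ into four parts, show each partial product avoids any prescribed target with probability $e^{-\Omega(R)}$, union-bound over intervals, and then use acceptable/permissible random orderings inside the blocks to handle short boundary intervals. Crucially, the greedy ordering of the rectified set $E$ (Proposition 4.1 here) must be built to control the overlap of $\delta\cdot\IS(\mathbf{x})$ with the sets $Y_j = \bigl(\prod_{=j}D_s\bigr)^{-1} \cup \delta^{-1}\prod_{=j}D_1$ in advance, so the two halves of the proof talk to each other through inequality \eqref{ISp-bound}; a pure window-greedy argument does not provide this.

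Finally, the exponent $1/4$ is not the depth of an iterated dichotomy and cannot arise from ``each layer multiplies the admissible size by $(\log p)^{O(1)}$'' (that would only ever give a polylogarithmic bound on $|A|$). It falls out of the parameter bookkeeping: with $|A| = e^{c(\log p)^{1/4}}$ one gets $R \asymp (\log p)^{3/4}$, the union bound over $\lesssim |A|^2$ intervals costs $e^{-\Omega(R)}$ per interval, so one needs $|A|^2 e^{-\Omega(R)} = o(1)$, and the boundary-interval parameter $K = c_2 R^{1/3} \asymp (\log p)^{1/4}$ has to dominate the contribution of Proposition 4.1's bound. Solving these simultaneous constraints pins down the $1/4$; there is no four-fold recursion.
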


Graham's conjecture over non-abelian groups was first studied by Ollis in \cite{OllisDi} over dihedral groups. More recently, the authors in \cite{BFMPY} presented a nice connection with rainbow paths and produced the asymptotic result that any subset $A$ of a finite group $G$ admits an ordering in which at least $(1-o(1))|A|$ many partial products are distinct. Inspired by the result of Kravitz (\cite{NK}) on cyclic groups, Costa and Della Fiore proved in \cite{CDF} that any subset $A$ of $Dih_{p}\setminus\{id\}$, where $p$ is prime and $Dih_p$ is the dihedral group of size $2p$, and in general any subset $A$ of a class of semidirect products, is sequenceable provided that $p>|A|!/2$. Note that this is slightly better than $|A|\leq \log p/\log\log p$.

Section \ref{sec: semidirect} of this paper is dedicated to proving, with a similar procedure, the sequenceability of subsets of semidirect products $\mathbb{Z}_p \rtimes_{\varphi} H$ where $\varphi: H \to Aut(\mathbb{Z}_p)$ satisfies $\varphi(h) \in \{id, -id\}$ for each $h \in H$ and where $H$ is abelian and strongly sequenceable. We note that for $H = \mathbb{Z}_2$, we are considering  the dihedral group $Dih_p$. In this setting, we improve the results of \cite{CDF} on dihedral groups, obtaining essentially the same bound as in the case of $\mathbb{Z}_p$ (with a different best possible constant $c$). That is, 
\begin{theorem}\label{thm:main2}
Let $H$ be a finite abelian and strongly sequenceable group. There exists a $c > 0$ such that every subset $A \subseteq \mathbb{Z}_p \rtimes_{\varphi} H \setminus \{id\}$, where $\varphi: H \to Aut(\mathbb{Z}_p)$ satisfies $\varphi(h) \in \{id, -id\}$ for each $h \in H$, of size
$$ |A| \leq e^{c(\log p)^{1/4}},$$
has a sequencing.
\end{theorem}

The main difficulty in adapting the approach of \cite{BK} to get this theorem is that the semidirect products are not necessary abelian and hence the product of the elements of a set is not fixed. For this reason, we have to impose some restrictions on the positions of certain elements.

\section{Proof Strategy}\label{sec:proof-strat}

We offer a brief summary of the main steps of our proof of Theorem \ref{thm:main2}. Note that we represent elements in $\mathbb{Z}_{p} \rtimes_{\varphi} H$ as $(x_i, a_i)$ where $x_i \in \mathbb{Z}_p$ and $a_i \in H$. 

We begin Section \ref{sec: semidirect} by describing how a small set $B \subseteq \mathbb{Z}_{p} \rtimes_{\varphi} H$ can be ``rectified'' such that all of its $\mathbb{Z}_p$ components are contained within a small interval. Then, for a nonempty subset $A \subseteq \mathbb{Z}_{p} \rtimes_{\varphi} H\setminus\{id\}$, we prove the existence of an automorphism $\phi \in Aut(\mathbb{Z}_{p} \rtimes_{\varphi} H)$ such that $\phi(A) = E \cup\left(\cup_{j=1}^{s} D_j\right)$, where each $D_j$ is a dissociated set with size a multiple of eight and where the $\mathbb{Z}_p$ components of $E$ are in a small interval by our rectification argument. Moreover, we can impose that each dissociated set $D_j$ has elements $d = (x, a) \in D_j$ such that either $\varphi(a) = -id$ for all $a$ (in which case we say that $j\in L_1$) or $\varphi(a) = id$ for all $a$ ($j\in L_0$). Here we split each $D_j$ with $j\in L_1$ into an odd-position subset $D^o_j$ and an even-position subset $D^e_j$ of about equal sizes. Then the product $\tau_j$ of the elements of $D_j$, with elements of $D^o_j$ in odd position and elements of $D^e_j$ in even position, does not depend on the orderings within $D_j^o$ and $D_j^e$. This means that, even if $\mathbb{Z}_{p} \rtimes_{\varphi} H$ is non-abelian, we have a constant product $\delta$ of the elements of the dissociated sets without fixing the ordering of all the elements.
We then relabel $\phi(A)$ by $A$ for ease of notation. 

In Section \ref{sec:PN}, we order the elements of $E$. Here we split $E$ into sets $P$ (with $\mathbb{Z}_p$ components positive\footnote{Here we say the positive elements are the ones in the interval $[1,\dots,(p-1)/2]$ while the negatives in $[-(p-1)/2, -1]$.} and $H$ components such that $\varphi(a) = id$), $N$ (with $\mathbb{Z}_p$ components negative and $H$ components such that $\varphi(a) = id$), $Z$ (with $\mathbb{Z}_p$ components always zero and $H$ components such that $\varphi(a) = id$), and $S$ (with $H$ components such that $\varphi(a) = -id$). We further split $S$ into subsets $S_e$ and $S_o$ of almost equal size, where the $\mathbb{Z}_p$ component of each element in $S_e$ is larger than the $\mathbb{Z}_p$ components of all of the elements in $S_o$. We then prove the existence of an ordering $\mathbf{x}$ of the elements of $P, N, Z,S_e,$ and $S_o$ such that $\delta, \mathbf{x}$ is a sequencing, where $\delta$ is the prespecified product of the elements of the $D_j$s. At the same time, we derive an important bound on the size of overlap between subsets of $\mathbb{Z}_{p} \rtimes_{\varphi} H$ and the set \{$\delta$ times the partial products of $\mathbf{x}$\}. 

Finally, we describe a two-step method of ordering the $D_j$s. First, we randomly split each $D^o_j$, each $D^e_j$ (when $j\in L_1$), and each remaining $D_j$ (when $j\in L_0$) into a four-set partition. 
Here, we use a probabilistic argument to prove that, given a sequencing $\mathbf{x}$ of $E$, we can obtain a sequencing $\tau_1,\dots \tau_u, \mathbf{x}$ of $E$ and of the products $\tau_i$ of the partitions of the $D_j$s.
To conclude, we use again a probabilistic argument to prove that if $\mathbf{x}$ satisfies a technical condition on the intersection between suitable subsets and the set \{$\delta$ times the partial products of $\mathbf{x}$\}, then there exists a sequencing of $E$ and each partition $T_i$ of the $D_j$s.
This gives us Theorem \ref{thm:main2}.
\section{Semidirect products}\label{sec: semidirect}
In this section, we consider groups that are semidirect products of the form $G=\mathbb{Z}_{p} \rtimes_{\varphi} H$ for some group homomorphism $\varphi: H \rightarrow Aut(\mathbb{Z}_p)$. We recall that the group operation is defined as
$$(x_1, a_1)\cdot_G (x_2, a_2)=(x_1+ \varphi(a_1)(x_2), a_1\cdot_H a_2).$$
Here the map
$$x\rightarrow \varphi(a_1)(x)$$
is an automorphism of $(\mathbb{Z}_p,+)$. It is well known that the automorphisms of $(\mathbb{Z}_p,+)$ are the multiplications and hence that there exists $\varphi_{a_1}\in \mathbb{Z}_p$ such that
$$\varphi(a_1)(x)=\varphi_{a_1}x.$$
Thus for this kind of semidirect product, we can rewrite the group operation as
$$(x_1, a_1)\cdot_G (x_2, a_2)=(x_1+ \varphi(a_1)(x_2))\cdot_G (a_1, a_2)=(x_1 + (\varphi_{a_1} x_2), a_1\cdot_H a_2).$$

Here we let $Dih_p$ denote the dihedral group $\mathbb{Z}_{p} \rtimes_{\varphi} \mathbb{Z}_2$, where $\varphi(1)(x)=-x$ and $\varphi(0)(x)=x$. In this case, the group operation is defined as
$$(x_1, a_1)\cdot_G (x_2, a_2)=(x_1 + (\varphi_{a_1} x_2), a_1+ a_2)$$
 where $\varphi_{0}=1$ and $\varphi_{1}=-1$.
If we set $-id: \mathbb{Z}_p\rightarrow \mathbb{Z}_p$ to be the map $-id(x)=-x$, then dihedral groups are an example of a semidirect product $\mathbb{Z}_{p} \rtimes_{\varphi} H$ where $\varphi: H \rightarrow Aut(\mathbb{Z}_p)$ satisfies the following property:
$$(\star) \qquad\qquad \varphi(h)\in \{id, -id\}.$$
In the following, we consider only semidirect products that satisfy property $(\star)$.

Given a not necessarily abelian group $G$, a subset $D=\{d_1, \ldots, d_r\} \subseteq G$ is \emph{dissociated} if
$$d_{\sigma(1)}^{\epsilon_{\sigma(1)}} \cdot \cdots \cdot d_{\sigma(r)}^ {\epsilon_{\sigma(r)}} \neq id$$
for all permutations $\sigma\in Sym(r)$ and $(\epsilon_{\sigma(1)}, \ldots, \epsilon_{\sigma(r)}) \in \{-1,0,1\}^r \setminus \{(0,\ldots, 0)\}$.

The \emph{dimension} of a subset $B \subseteq G$, written $\dim(B)$, is the size of the largest dissociated set contained in $B$.
We can characterize these maximal dissociated subsets as follows:
\begin{lemma}\label{lem:dissociated-set-generator}
Let $B \subseteq G$ be a finite subset of a not necessarily abelian group.  If $D=\{d_1, \ldots, d_r\}$ is a maximal dissociated subset of $B$, then
$$B \subseteq \Span(D) \vcentcolon= \left\{d_{\sigma(1)}^{\epsilon_{\sigma(1)}} \cdot \cdots \cdot d_{\sigma(r)}^ {\epsilon_{\sigma(r)}}: \sigma \in Sym(r), \epsilon_{\sigma(i)}\in\{-1,0,1\}\right\}.$$
\end{lemma}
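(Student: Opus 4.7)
The plan is to show that every $b \in B$ lies in $\Span(D)$, and the lever is the maximality of $D$. If $b \in D$, the statement is immediate: choose any permutation $\sigma$ and set the exponent of the position containing $b$ equal to $1$ and all other exponents equal to $0$. So the interesting case is $b \in B \setminus D$.

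For such a $b$, I would consider the set $D \cup \{b\} \subseteq B$, which has cardinality $r+1$. By maximality of $D$, this enlarged set cannot be dissociated, so there exist a permutation $\pi$ of $\{d_1,\ldots,d_r,b\}$ and exponents $(\epsilon_1,\ldots,\epsilon_{r+1}) \in \{-1,0,1\}^{r+1}$, not all zero, with
$$\pi(1)^{\epsilon_1} \cdot \pi(2)^{\epsilon_2} \cdot \cdots \cdot \pi(r+1)^{\epsilon_{r+1}} = id.$$
The key observation is that the exponent $\epsilon_b$ attached to $b$ in this relation must be nonzero: otherwise we would have a nontrivial relation purely among $d_1,\ldots,d_r$, contradicting the fact that $D$ itself is dissociated. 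Hence $\epsilon_b \in \{-1,+1\}$ and we may isolate $b^{\epsilon_b}$ on one side.

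After moving everything to the other side and expanding $(xy)^{-1} = y^{-1}x^{-1}$, the right-hand side becomes a product of the $d_i$s, in some order, each with exponent in $\{-1,0,1\}$ (the $d_i$s not originally appearing contribute with exponent $0$). In other words, $b^{\epsilon_b} \in \Span(D)$. To finish, I would note that $\Span(D)$ is closed under inversion: reversing the order of the permutation and negating each $\epsilon_i$ still yields a representative of the form required in the definition. Thus $b = (b^{\epsilon_b})^{\epsilon_b} \in \Span(D)$ in either case.

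The proof is essentially formal once one has set up the definitions, so there is no genuine obstacle; the only thing that requires care, compared to the abelian setting, is the non-abelian bookkeeping when distributing the inverses, since reversal of the permutation matters. No deep argument is needed — the whole content is packaged in the definition of dissociated set as permutation-invariant and the maximality of $D$.
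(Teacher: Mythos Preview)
Your proof is correct and follows essentially the same approach as the paper: use maximality of $D$ to conclude that $D\cup\{b\}$ is not dissociated, observe that the exponent on $b$ in the resulting relation must be nonzero (else $D$ itself fails to be dissociated), and then solve for $b$. Your treatment is in fact slightly more explicit than the paper's, which tacitly assumes $\epsilon_b=+1$; you instead handle both signs by noting that $\Span(D)$ is closed under inversion.
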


\begin{proof}
Due to the maximality of $D$, for every element $b\in B\setminus D$, $\{b\}\cup D$ is not dissociated. This means that there exists an ordering $\sigma$ and coefficients $ \epsilon_{\sigma(i)}\in\{-1,0,1\}$ such that
$$d_{\sigma(1)}^{\epsilon_{\sigma(1)}} \cdot \cdots d_{\sigma(i-1)}^{\epsilon_{\sigma(i-1)}}\cdot b\cdot d_{\sigma(i)}^{\epsilon_{\sigma(i)}}\cdots \cdot d_{\sigma(r)}^ {\epsilon_{\sigma(r)}} = id.$$
It follows that we have
$$b=\left(d_{\sigma(1)}^{\epsilon_{\sigma(1)}} \cdot \cdots d_{\sigma(i-1)}^{\epsilon{\sigma(i-1)}}\right)^{-1} \left(d_{\sigma(i)}^{\epsilon(i)}\cdots \cdot d_{\sigma(r)}^ {\epsilon_{\sigma(r)}}\right)^{-1}$$
and hence $b\in Span(D)$.
\end{proof}

Next, the following lemma relates the size of a set and its dimension.
\begin{lemma}\label{lem:dimension-cardinality2}
Let $G=\mathbb{Z}_{p} \rtimes_{\varphi} H$ where $H$ is abelian and $\varphi$ satisfies property $(\star)$. Let $B$ be a finite subset of $G$. Then
$|B| \leq 5^{\dim(B)}$.
\end{lemma}
\begin{proof}
Let $D=\{(x_1,a_1),(x_2,a_2),\dots,(x_r,a_r)\}$ be a maximal dissociated subset of $B$ of size $dim(B)$. Due to the previous lemma, this means that any element $(x,a)$ of $B$ belongs to $\Span(D)$ and hence
$$(x,a)=d_{\sigma(1)}^{\epsilon_{\sigma(1)}} \cdot \cdots \cdot d_{\sigma(t)}^ {\epsilon_{\sigma(t)}}$$ where $d_i = (x_i,a_i) \in D$, $\sigma \in Sym(r), \epsilon_{\sigma(1)},\dots, \epsilon_{\sigma(t)}\in\{-1,1\},$ and $\epsilon_{\sigma(t+1)}=\cdots=\epsilon_{\sigma(r)}=0.$
In components, this means
\begin{align*}(x,a)&=\left(\pm x_{\sigma(1)}\pm \varphi_{a_{\sigma(1)}}x_{\sigma(2)}+\dots\pm\varphi_{a_{\sigma(1)}+\cdots+a_{\sigma(t-1)}}x_{\sigma(t)},\sum_{i=1}^t \pm a_{\sigma(i)}\right) \\
&= \left(\gamma_1 x_{1}+\gamma_2 x_{2}+\dots+\gamma_r x_r,\lambda_1 a_{1}+\lambda_2 a_{2}+\dots+\lambda_r a_r\right)
\end{align*}
where $(\gamma_i,\lambda_i)\in \{(0,0),(1,1),(-1,1),(1,-1),(-1,-1)\}$. Since we have five choices for each pair $(\gamma_i,\lambda_i)$, it follows that $|B| \leq 5^{\dim(B)}$.
\end{proof}

Following \cite{BK}, we state a lemma that allows us to ``rectify'' small sets. We define the parameter
\begin{equation}
R=R(A):=c_1\max\left((\log p)^{1/2},\frac{\log p}{\log |A|}\right),
 \end{equation}
where $c_1$ is a sufficiently small absolute constant.

\begin{lemma}\label{lem:small-dim-recitification}

Let $G=\mathbb{Z}_{p} \rtimes_{\varphi} H$ where $H$ is abelian and $\varphi$ satisfies property $(\star)$. If $B \subseteq G$ is a nonempty subset of dimension $\dim(B)< R$, then there is some $\phi \in Aut(G)$ such that the image $\pi_1(\phi(B))$ (where $\pi_1$ is the projection over the $\mathbb{Z}_p$ component) is contained in the interval $(-\frac{p}{100|B|},\frac{p}{100|B|})$.
\end{lemma}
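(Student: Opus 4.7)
The plan is to adapt the Bedert--Kravitz rectification argument to the non-abelian setting of $Dih_p$. First, I would identify $Aut(Dih_p)$: combining the scaling $(x,a)\mapsto(\alpha x,a)$ for $\alpha\in\mathbb{F}_p^*$ with the inner automorphism by a rotation $(t,0)$, which acts as $(x,a)\mapsto(x+2ta,a)$, every automorphism takes the form $\phi_{\alpha,\gamma}(x,a)=(\alpha x+\gamma a,\,a)$ for some $\alpha\in\mathbb{F}_p^*$ and $\gamma\in\mathbb{F}_p$. In particular $\pi_1(\phi_{\alpha,\gamma}(x,a))=\alpha x+\gamma a$, so since the conclusion of the lemma only constrains $\pi_1$, it suffices to take $\gamma=0$ and exhibit $\alpha\in\mathbb{F}_p^*$ with $\alpha\,\pi_1(B)\subseteq(-L,L)$, where $L:=p/(100|B|)$.

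Next, I would fix a maximal dissociated subset $D=\{d_i=(y_i,b_i):1\leq i\leq r\}$ of $B$ with $r=\dim(B)<R$. The decomposition carried out in the proof of \cref{lem:dimension-cardinality} shows that every $b\in B$ admits an expression $\pi_1(b)=\sum_{i=1}^{r}\gamma_i\,y_i$ with coefficients $\gamma_i\in\{-1,0,1\}$. Consequently, if $\alpha$ is chosen so that each $\alpha y_i$ lies in $(-\delta,\delta)$ modulo $p$ (using the least-residue representative in $(-p/2,p/2]$), where $\delta:=L/r$, then the triangle inequality yields $|\alpha\,\pi_1(b)|\leq r\delta=L$ for all $b\in B$, which is exactly the rectification required.

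The existence of such an $\alpha$ will follow from Minkowski's first theorem applied to the lattice $\Lambda\subset\mathbb{R}^{r+1}$ spanned by $(1,y_1,\ldots,y_r)$ and by $p\,\mathbf{e}_{j+1}$ for $1\leq j\leq r$; this lattice has covolume $p^r$. The symmetric convex body $(-p,p)\times(-\delta,\delta)^r$ has volume $2^{r+1}p\delta^r$, which beats Minkowski's threshold $2^{r+1}p^r$ precisely when $\delta>p^{\,1-1/r}$, equivalently when $r\log(100|B|r)<\log p$. A non-zero lattice point in that body then delivers integers $\alpha\in\{1,\ldots,p-1\}$ (the case $\alpha=0$ is ruled out, since it would force $|pk_i|<\delta<p$ and hence $k_i=0$) together with $k_1,\ldots,k_r$ satisfying $|\alpha y_i-p k_i|<\delta$, as desired.

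The main obstacle --- and the reason behind the specific shape of $R$ in \eqref{Rdefinition} --- is verifying that the Minkowski condition $r\log(100|B|r)<\log p$ really does follow from the hypothesis $r<R$. I would split into the two regimes governing $R$: if $R=c_1(\log p)^{1/2}$, then \cref{lem:dimension-cardinality} gives $\log|B|\leq r\log 3=O(c_1\sqrt{\log p})$, so $r\log(100|B|r)=O(c_1^{2}\log p)$, which is below $\log p$ for $c_1$ sufficiently small; if instead $R=c_1\log p/\log|B|$, the dominant contribution $r\log|B|$ is directly bounded by $c_1\log p$, and the residual $r(\log 100+\log r)$ is absorbed using $r\leq R$ to control $\log r$. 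In either regime a small enough choice of $c_1$ secures the strict inequality, and the resulting $\phi=\phi_{\alpha,0}\in Aut(Dih_p)$ proves the lemma.
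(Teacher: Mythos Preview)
Your argument is correct and follows essentially the same route as the paper: pick a maximal dissociated subset $D=\{(y_i,b_i)\}$, use the expansion from \cref{lem:dimension-cardinality} to write every $\pi_1(b)$ as a $\{-1,0,1\}$-combination of the $y_i$, find a dilation $\alpha\in\mathbb{F}_p^*$ that makes all the $\alpha y_i$ small, and then verify the numerical inequality $r\log(100|B|r)<\log p$ from the hypothesis $r<R$. The paper locates $\alpha$ via the pigeonhole principle on $\{\lambda(y_1,\dots,y_r):\lambda\in\mathbb{F}_p\}\subseteq\mathbb{F}_p^{\,r}$, obtaining $\|\alpha y_i\|\leq p^{1-1/r}$, whereas you obtain the same bound through Minkowski's first theorem on the lattice $\langle(1,y_1,\dots,y_r),\,p\mathbf{e}_2,\dots,p\mathbf{e}_{r+1}\rangle$; the two arguments are interchangeable here. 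One small remark: in your Case~2 verification, the cleanest way to absorb the residual $r\log r$ is not ``$r\le R$'' but rather the trivial bound $r=\dim(B)\le|B|$, which gives $\log r\le\log|B|$ and hence $r\log r\le r\log|B|<c_1\log p$ directly.
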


\begin{proof}
Given $\lambda\in \mathbb{Z}_p$, we consider here the family of maps $\phi_{\lambda}: G\rightarrow G$
defined as
$$ \phi_{\lambda}(x,a)=(\lambda x,a).$$
Note that, if $\lambda\not=0$, then $\phi_{\lambda}\in Aut(G)$.

Let us now consider a maximal dissociated subset $D=\{(x_1,a_1),\dots,(x_r,a_r)\}$ of $B$, where $r=|D|= \dim(B)$. Lemma~\ref{lem:dissociated-set-generator} tells us that $B\subseteq \Span(D)$.

Consider the set
$$\{\lambda(x_1,\dots,x_r): \lambda \in \mathbb{Z}_p\} \subseteq (\mathbb{Z}_p)^{r}.$$
Due to the pigeonhole principle, there exists distinct $\lambda_1, \lambda_2 \in \mathbb{Z}_p$ such that $\|\lambda_1 x_i -\lambda_2 x_i\| \leq p^{1-1/r}$ for all $i\in [1,r]$.  Set $\lambda\vcentcolon=\lambda_1-\lambda_2$, we have that $\phi_\lambda\in Aut(G)$ and $\pi_1(\phi_\lambda(D))\subseteq  [-p^{1-1/\dim(B)},p^{1-1/\dim(B)}]$.
Since $B\subseteq \Span(D)$, we have
$$\pi_1(\phi(B)) \subseteq [-\dim(B)p^{1-1/\dim(B)},\dim(B)p^{1-1/\dim(B)}].$$
The thesis follows if we prove that 
\begin{equation}\label{goal}
\dim(B)p^{1-1/\dim(B)}<p/(100|B|)
\end{equation} 
as long as $c_1$ is chosen to be sufficiently small.

Set $h=dim(B)$, Equation \eqref{goal} is equivalent to
$$\log({h p^{1-1/h}})=\log(h)+\log{p}-1/h\log{p}<\log{p}-\log(100)-\log{|B|}$$
and so to
\begin{equation}\label{eq: r1}
    h \log(h) + h \log(100) + h\log|B| < \log p.
\end{equation}
We first consider the case where $R = c_1 (\log p)^{1/2}$. We note that due to Lemma \ref{lem:dimension-cardinality2} this relation is implied by
$$h\log{h}+2h\log{10}+h^2\log{5}<\log{p}
$$
which holds since we have assumed that $h < R = c_1 (\log p)^{1/2}$.

Let know consider the case where $h < R = c_1 \log p / \log |B|$. Here Equation \eqref{eq: r1} holds since for $c_1$ small enough we have
$$1/3 \log p \geq \max \left\{ h \log h, h \log (100), h \log |B| \right\}.$$
\end{proof}

Before stating the structure theorem for semidirect products, we prove the following lemma, which allows us to modify a dissociated set by absorbing a given element.
\begin{lemma}\label{absorptionlemma}
Let $G$ be a not necessarily abelian group and let $D_1 \cup D_2$ be a partition of a dissociated subset $D$ of $G$.  For every element $x \in G \setminus \{id\}$, either $D_1 \cup \{x\}$ or $D_2 \cup \{x\}$ is dissociated.
\end{lemma}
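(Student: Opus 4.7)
The plan is to proceed by contradiction: suppose that neither $D_1 \cup \{x\}$ nor $D_2 \cup \{x\}$ is dissociated, and derive from this a nontrivial signed-ordered relation among the elements of $D = D_1 \cup D_2$, contradicting the assumption that $D$ itself is dissociated.

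First I would unpack what failure of dissociation gives us for $D_1 \cup \{x\}$. By definition, there is a permutation $\sigma$ of $D_1\cup\{x\}$ and exponents in $\{-1,0,1\}$, not all zero, such that the corresponding ordered signed product equals $id$. Since $D_1\subseteq D$ is itself dissociated, the exponent on $x$ in this relation cannot be $0$; hence $x$ appears once with exponent $\pm 1$, sandwiched between two signed words in elements of $D_1$ (each element of $D_1$ occurring at most once). Solving for $x$ yields an expression
\[
x = w_1,\qquad\text{where } w_1 \text{ is an ordered signed product of a subset of } D_1,
\]
with each element of $D_1$ occurring at most once (and exponent $\pm 1$ when it does occur). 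The same reasoning applied to $D_2\cup\{x\}$ produces
\[
x = w_2,\qquad\text{where } w_2 \text{ is an analogous signed word in } D_2.
\]
Moreover, since $x\neq id$, each $w_i$ must actually involve at least one element of $D_i$ (otherwise the corresponding relation would reduce to $x^{\pm 1}=id$).

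From $w_1 = w_2$ we obtain $w_1 \cdot w_2^{-1} = id$. Since the elements of $D_1$ and $D_2$ are disjoint and each appears at most once in the concatenated word (with exponents in $\{-1,+1\}$ where they appear), this is precisely a nontrivial signed-ordered relation on a permutation of $D = D_1\cup D_2$ (padding with exponent $0$ for elements of $D$ that do not appear). The nontriviality comes from the fact, noted above, that at least one element of $D_1$ and at least one element of $D_2$ is actually used. This contradicts the dissociation of $D$ and proves the lemma.

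The only delicate point — and the one I would be most careful about in writing out the proof — is the bookkeeping that ensures the combined relation $w_1 \cdot w_2^{-1} = id$ is genuinely of the ordered signed-product form required by the definition of ``dissociated'', namely that no element of $D$ is used twice. This is automatic because $D_1 \cap D_2 = \emptyset$ and each of $w_1,w_2$ arose from a permutation-with-signs of its corresponding half, but it is worth stating explicitly so that the argument does not depend on any cancellation in the group.
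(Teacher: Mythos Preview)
Your proof is correct and follows essentially the same approach as the paper: assume for contradiction that both $D_1\cup\{x\}$ and $D_2\cup\{x\}$ fail to be dissociated, use the dissociation of $D_1$ and $D_2$ to force $x$ into a signed word $w_1$ over $D_1$ and a signed word $w_2$ over $D_2$, and then observe that $w_1 w_2^{-1}=id$ is a nontrivial relation in $D$, contradicting its dissociation. The paper packages the step ``$x=w_i$'' by invoking its Lemma~\ref{lem:dissociated-set-generator} (so that $x\in\Span(D_1)\cap\Span(D_2)$), whereas you unpack that argument by hand, but the substance is identical; your added care about nontriviality and about no element of $D$ being used twice is appropriate and, if anything, slightly more detailed than the paper's version.
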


\begin{proof}
Let us set $D_1=\{d_1,\dots,d_t\}$ and $D_2=\{d_{t+1},\dots, d_r\}$.
Assume by contradiction that neither $D_1 \cup \{x\}$ nor $D_2 \cup \{x\}$ is dissociated.  Since $D_1$ and $D_2$ are dissociated, Lemma \ref{lem:dissociated-set-generator} tells us that
$$x\in Span(D_1)\cap Span(D_2).$$
Therefore, we can write
$$x=d_{\sigma(1)}^{\epsilon_{\sigma(1)}} \cdot \cdots \cdot d_{\sigma(t)}^ {\epsilon_{\sigma(t)}}= d_{\sigma(t+1)}^{\epsilon_{\sigma(t+1)}} \cdot \cdots \cdot d_{\sigma(r)}^ {\epsilon_{\sigma(r)}}$$
and hence
$$(d_{\sigma(1)}^{\epsilon_{\sigma(1)}} \cdot \cdots \cdot d_{\sigma(t)}^ {\epsilon_{\sigma(t)}})(d_{\sigma(t+1)}^{\epsilon_{\sigma(t+1)}} \cdot \cdots \cdot d_{\sigma(r)}^ {\epsilon_{\sigma(r)}})^{-1}=id $$
which implies the contradiction that $D$ would not be a dissociated set.
\end{proof}

Now we show that any subset $A$ of $G=\mathbb{Z}_{p} \rtimes_{\varphi} H$, where $H$ is abelian and $\varphi$ satisfies property $(\star)$, can be split into a family of dissociated sets and a remainder set $E$ of small dimension which can be embedded, as a consequence of the rectification lemma, into $\left(-\frac{p}{90(|E|+1)},\frac{p}{90(|E|+1)}\right)$. We assume that $A$ is not contained in $\{(0,h) : h\in H\}$ since $H$ is strongly sequenceable and therefore there exists a sequencing of $A$.

Using standard asymptotic notation, we say that $f \asymp g$ if there exists a constant $C > 0$ such that $f = C g$. We also define $f(p) = o(g(p))$ if $\lim_{p \to \infty} f(p) / g(p) = 0$.

\begin{theorem}[Structure Theorem]\label{prop:refined-structure}
Let $p$ be large enough and let $G=\mathbb{Z}_{p} \rtimes_{\varphi} H$ where $H$ is abelian, is strongly sequenceable, and $\varphi$ satisfies property $(\star)$. For every nonempty subset $A \subseteq G \setminus \{id\}$, there is some $\phi \in Aut(G)$ such that $\phi(A)$ can be partitioned as
$$\phi(A) = E \cup\left(\cup_{j=1}^{s} D_j\right),$$
where
\begin{enumerate}
\item[(i)] $dim(E)<R$. 
\end{enumerate}
Moreover, if $s>0$,
\begin{enumerate}
\item[(ii)] Each $D_j$ is a dissociated set of size $|D_j| \asymp R$ and $|D_j|$ is a multiple of eight.
\item[(iii)] We can split the interval $[1,s]=L_0\cup L_1$ so that $1$ and $s$ belong to the same set $L_i$, and for any $j\in  L_1$ we have that $D_j\subseteq \{(x,a) : x\in \mathbb{Z}_p\}$ with $\varphi(a)=-id$, and for any $j\in L_0$ we have that $D_j\subseteq \{(x,b) : x\in \mathbb{Z}_p\}$ with $\varphi(b)=id$. 
\item[(iv)] Given $j\in L_1$, we split $D_j$ into $D_j^o = \{d_{j,1}^o, \ldots, d_{j,|D_j^o|}^o\}$ and $D_j^e = \{d_{j,1}^e, \ldots, d_{j,|D_j^e|}^e\}$ where $|D_j^o|=|D_j^e|=|D_j|/2$. Then we set
$$\delta\vcentcolon=\left(\prod_{j\in L_1} d_{j,1}^o \cdot d_{j,1}^e \cdot d_{j,2}^o \cdot d_{j,2}^e \cdots d_{j, |D_j^o|}^o \cdot d_{j, |D_j^e|}^e \right) \left(\prod_{j\in L_0}\prod_{d\in D_j} d\right)$$
which does not depend\footnote{We will use the notation $\prod_{d^o \in D^o, d^e \in D^e} d^o \cdot d^e$ without specifying the orderings if the result does not depend on them.} on the orderings on $D_j^o$, $D_j^e$, $D_j$, and on $L_0$ and $L_1$.
Then $\delta=(z_0,a_0)$ where $\varphi(a_0)=id$, $z_0\not=0$, and
$$\pi_1(E\cup\{\delta\}) \subseteq \left(-\frac{p}{90(|E|+1)},\frac{p}{90(|E|+1)}\right).$$
\item[(v)] $D_1 \cup D_s \cup \{\delta\}$ is a dissociated set.
\end{enumerate}
\end{theorem}
\begin{proof}
Following \cite{BK}, we begin with $E'=A$ and assume that $\dim(E')>R$ (i.e. $s>0$). Then, as long as $\dim(E') \geq R$, we can iteratively remove a dissociated subset $D'''$ of size $R/2$ and hence one, say $D''$, whose size is in the interval $[R/4-31, R/2]$ and congruent to one modulo thirty-two, and such that $\pi_2(D'')$ (i.e. the projection over $H$) is constantly in $H_0$ (i.e. the set $h\in H: \varphi(h)=id$) or constantly in $H_1$ (i.e. the set $h\in H: \varphi(h)=-id$). Also, if we divide $D''$ into two sets ${D^o}''=\{{d_1^o}'',{d_2^o}''\dots,{d_{h+1}^o}''\}$  and ${D^e}''=\{{d_1^e}'',{d_2^e}''\dots,{d_h^e}''\}$, then
$$\delta''\vcentcolon={d_1^o}''\cdot {d_1^e}''\cdots {d_h^o}''\cdot {d_h^e}''\cdot {d_{h+1}^o}''$$
does not depend on the orderings on ${D^o}''$ and ${D^e}''$. Note that up to deleting one element from ${D^{o}}''$, we can always assume that $\delta''$ is different from a given element $\bar{x}\in G$. This means that as long as $\dim(E') \geq R$, we can iteratively remove a dissociated subset $D'$, whose size is in the interval $[R/4-32, R/2]$ and is a multiple of thirty-two, and such that $\pi_2(D')$ is constantly in $H_0$ or constantly in $H_1$ and the product of its elements (in a given order) is different from a chosen element $\bar{x}\in G$. 

Once we reach a residual set $E'$ of dimension smaller than $R$, we obtain a decomposition $$A = E' \cup(\cup_{j\in L_1'} D_j')\cup(\cup_{j\in L_0'}^{s'} D_j'),$$
where each $D_j'$ is a dissociated set whose size is in the interval $R/4-32\leq |D_j'|\leq R/2$ and is a multiple of thirty-two, for any $j\in L_1'$ we have that $D'_j\subseteq \{(x,a): x\in \mathbb{Z}_p,a\in H_1\}$, and for any $j\in L_0'$ we have that $D'_j\subseteq \{(x,a): x\in \mathbb{Z}_p, a\in H_0\}$.

Given $j\in L_1'$, we split $D_j'$ into ${D_j^o}'$  and ${D_j^e}'$ so that $|{D_j^o}'|=|{D_j^e}'|$. Then we set
$$\delta'=\left(\prod_{j\in L_1'} \prod_{{d^o}' \in {D_j^o}', {d^e}' \in {D_j^e}'} {d^o}' \cdot {d^e}'\right) \left(\prod_{j \in L_0'}\prod_{d'\in D'_j} d'\right),$$
which does not depend on the orderings on ${D_j^o}'$ and ${D_j^e}'$.
Also, due to the previous consideration, we can assume that $\delta'\not=(0,0)$.

Then Lemma~\ref{lem:small-dim-recitification} (applied to $E' \cup \{\delta'\}$) provides an automorphism $\phi \in Aut(G)$ such that
$$\phi(A)=D_1 \cup \cdots \cup D_{s'} \cup E,$$ where $E$ satisfies property $(i)$ and each $D_j$ is a dissociated set of size a multiple of thirty-two which satisfies properties $(ii)$ (with the stronger lower bound $R/4-32$) and $(iii)$.

Setting $D_j^o=\phi({D_j^o}')$, $D_j^e=\phi({D_j^e}')$, and $\delta=\phi(\delta')$, we have that, since $|D_j|$ is even for any $j$, $\delta$ is of the form $(z_0,a_0)$ where $z_0\not=0$ and $a_0\in H_0$. Also, due to the choice of $\phi$, $$\pi_1(\phi(E' \cup \{\delta'\}))=\pi_1(E\cup\{\delta\}) \subseteq \left(-\frac{p}{90(|E|+1)},\frac{p}{90(|E|+1)}\right)$$ which implies condition $(iv)$.

Following again \cite{BK}, we can modify the decomposition to satisfy (v) as follows. We split $D_1^o$ and $D_1^e$ into $2$ parts $D_1^{o,(1)},D_1^{o,(2)}$ and  $D_1^{e,(1)},D_1^{e,(2)}$ each of size $|D_1|/4\geq R/16-8$. Lemma \ref{absorptionlemma} ensures that either $D_1^{o,(1)}\cup D_1^{e,(1)} \cup \{\delta\}$ or $D_1^{o,(2)}\cup D_1^{e,(2)}  \cup \{\delta\}$ is dissociated. Setting $D_1^{(1)}=D_1^{o,(1)}\cup D_1^{e,(1)}$, without loss of generality, we can assume that $D_1^{(1)}\cup \{\delta\}$ is dissociated.

Then further split $D_1^{o,(1)}$ and $D_1^{e,(1)}$ into $2$ parts $D_1^{o,(3)}, D_1^{o,(4)}$ and $D_1^{e,(3)}, D_1^{e,(4)}$ each of size $|D_1|/8\geq R/32-4$. Replace the sequence of sets $D_1, \ldots, D_{s'}$ by the sequence $$D_1^{(3)}, D_1^{(2)}, D_2, D_3, \ldots, D_{s'}, D_1^{(4)},$$ where $D_1^{(2)}=D_1^{o,(2)}\cup D_1^{e,(2)}$, $D_1^{(3)}=D_1^{o,(3)}\cup D_1^{e,(3)}$, and $D_1^{(4)}=D_1^{o,(4)}\cup D_1^{e,(4)}.$
For notational convenience, we rename this sequence as
$$D_1, \ldots, D_{s}.$$

Since this procedure does not change the value of $\delta$, the new decomposition satisfies all the required properties.
\end{proof}
Note that, in the Structure Theorem \ref{prop:refined-structure}, if $s>0$ then we always obtain a decomposition where $|E|\geq R/2$.

We conclude this section by stating a structural property of dissociated sets that is important for the probabilistic section of this paper.

\begin{lemma}\label{distinct-products}
Consider a dissociated set $D=D^o\cup D^e \subseteq G$ where $|D^o|=\lceil |D|/2 \rceil$ and $|D^e|=\lfloor |D|/2 \rfloor$ for $G=\mathbb{Z}_{p} \rtimes_{\varphi} H$ where $H$ is abelian, $\varphi$ satisfies property $(\star)$, and the $H$ component of each element of $D$ is in $H_1$. Then for any 
$$D_i=\{d_1,\dots,d_{h}\},\  D_j=\{d_1',\dots,d_{\ell}'\} \subseteq D$$
where $D_i \neq D_j$, $d_k,d_k'\in D^o$ if $k$ is odd, and $d_k,d_k'\in D^e$ if $k$ is even, we have 
$$d_1d_2\cdots d_h \neq d_1'd_2'\cdots d_{\ell}'.$$
\end{lemma}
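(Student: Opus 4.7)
My plan is to compute both products explicitly using the semidirect-product structure, extract a signed relation among the $\mathbb{F}_p$-components, and then exhibit a nonempty subset of $D$ that multiplies (in some order) to the identity, contradicting dissociativity. Writing $d_k=(x_k,1)$ and expanding by induction on the length, the group operation of $Dih_p$ gives
$$d_1 d_2 \cdots d_h = \left(\sum_{k=1}^h (-1)^{k+1} x_k,\ h \bmod 2\right),$$
so odd-position elements contribute with sign $+1$ to the $\mathbb{F}_p$-component, even-position elements contribute with sign $-1$, and the $\mathbb{F}_2$-component records the parity of $h$.

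Next, suppose for contradiction that $d_1\cdots d_h = d_1'\cdots d_\ell'$. Matching $\mathbb{F}_2$-components forces $h\equiv \ell\pmod 2$, while matching $\mathbb{F}_p$-components and rearranging gives
$$\sum_{d\in P_+}\pi_1(d) = \sum_{d\in P_-}\pi_1(d),$$
where, with $A_o=D_i\cap D^o$, $A_e=D_i\cap D^e$, $B_o=D_j\cap D^o$, and $B_e=D_j\cap D^e$, I define
$$P_+ := (A_o\setminus B_o)\cup (B_e\setminus A_e),\qquad P_- := (B_o\setminus A_o)\cup (A_e\setminus B_e).$$
The sets $P_+$ and $P_-$ are disjoint subsets of $D$ since $D^o\cap D^e=\emptyset$.

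Now I would check that $|P_+|=|P_-|$. Because odd positions must lie in $D^o$ and even positions in $D^e$, the difference $|A_o|-|A_e|$ equals $1$ if $h$ is odd and $0$ if $h$ is even, and analogously for $|B_o|-|B_e|$; the parity match $h\equiv\ell\pmod 2$ therefore yields $|A_o|-|B_o|=|A_e|-|B_e|$, which, via the identity $|X|-|Y|=|X\setminus Y|-|Y\setminus X|$, is exactly $|P_+|=|P_-|$. Finally, since $D_i\ne D_j$, the set $S:=P_+\cup P_-\subseteq D$ is nonempty and of even size $2|P_+|$. Listing the elements of $S$ with those of $P_+$ in odd positions and those of $P_-$ in even positions, the explicit formula gives a product whose $\mathbb{F}_p$-component is $\sum_{P_+}\pi_1-\sum_{P_-}\pi_1=0$ and whose $\mathbb{F}_2$-component is $0$, hence equal to $id$. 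Assigning exponent $0$ to the elements of $D\setminus S$ yields a non-trivial signed-permutation product of $D$ equal to $id$, contradicting the dissociativity of $D$. The one delicate point is noticing that the parity constraint forced by matching $\mathbb{F}_2$-components is precisely what guarantees $|P_+|=|P_-|$; once that balancing is in place, the reduction to dissociativity is essentially automatic.
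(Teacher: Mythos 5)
Your proposal is correct and takes essentially the same approach as the paper: both expand the alternating-sign product formula in $Dih_p$, match $\mathbb{F}_2$-components to force $h\equiv\ell\pmod 2$, cancel shared elements (noting that the position-parity constraint makes them appear with opposite signs), and turn the surviving balanced relation into a length-$|S|$ product equal to $id$, contradicting dissociativity. Your version is slightly more explicit about the bookkeeping — you name $P_+$, $P_-$ and verify $|P_+|=|P_-|$ via set-difference cardinalities, whereas the paper argues the sign balance indirectly from the pre-cancellation count — but the underlying argument is identical.
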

\begin{proof}
Assume for the sake of contradiction that there exist $D_i=\{d_1,\dots,d_{h}\},\  D_j=\{d_1',\dots,d_{\ell}'\} \subseteq D$
where $D_i \neq D_j$, $d_k,d_k'\in D^o$ if $k$ is odd, $d_k,d_k'\in D^e$ if $k$ is even, and 
$d_1d_2\cdots d_h = d_1'd_2'\cdots d_{\ell}'.$
Setting $d_k=(x_k,a_k)$ and $d_k'=(x_k',a_k')$, in components we have that
\begin{equation}\label{equality}
(x_1-x_2+x_3-\dots (-1)^{h+1}x_h,\sum_{i=1}^h a_i) = (x_1'-x_2'+x_3'-\dots (-1)^{\ell+1}x_{\ell}',\sum_{i=1}^{\ell} a_i').\end{equation}
This is only possible if $h\equiv \ell \pmod{2}$ since $(-id)^h = \varphi(\sum_{i=1}^h a_i) =\varphi(\sum_{i=1}^{\ell} a'_i) = (-id)^{\ell}$ and by moving everything to the left-hand side, we get
\begin{equation}\label{difference}
(x_1-x_2+x_3-\dots (-1)^{h+1}x_h-(x_1'-x_2'+x_3'-\dots (-1)^{\ell+1}x_{\ell}'),0) = (0,0).
\end{equation}
Here we note that the first component of the left-hand side of Equation \eqref{difference} contains the same number of $+$ and $-$ signs. Also, if some element $d=(x,a)$ appears both in both the left-hand and right-hand sides of Equation \eqref{equality}, then we must have $x=x_k=x_{b}'$ where $k\equiv b \pmod{2}$. Hence $x$ appears in Equation \eqref{difference} with different signs, so we still have the same number of $+$ and $-$ signs after erasing these repeated elements. Thus Equation \eqref{difference} becomes
$$(x_1''-x_2''+x_3''-\dots -x_t'',0) = (0,0).$$
Setting $d_k''=(x_k'',\pm a_k'')$ (according to whether $d_k''$ comes from the right- or left-hand side of Equation \eqref{equality}) this implies that
$$d_1''d_2''\cdots d_t''=id.$$
Since $d_k'^{-1}=(x_k',-a_k')$, the previous relation is in contradiction with the fact that $D$ is dissociated.
\end{proof}

\section{\texorpdfstring{Ordering the Set $E$ and the Dissociated Sets}{Ordering the Set E and the Dissociated Sets}}\label{sec:PN}
Now, the proof proceeds mainly in two steps. Due to the Structure Theorem \ref{prop:refined-structure}, we can assume without loss of generality that $$A = E \cup(\cup_{j=1}^{s} D_j)$$
where the $D_j$s are dissociated and satisfy the hypotheses of Theorem \ref{prop:refined-structure} and we have that $\pi_1(E)\subseteq \left(-\frac{p}{90(|E|+1)},\frac{p}{90(|E|+1)}\right)$. Recall that, given $d\in \mathbb{Z}_{p} \rtimes_{\varphi} H$ where $H$ is abelian and $\varphi$ satisfies property $(\star)$, we let $\pi_1(d)$ and $\pi_2(d)$ be the projections of $d$ over $\mathbb{Z}_p$ and $H$, respectively. We begin by ordering the set $E$ and, for this purpose, set some new notations. We split $E$ into four sets:
\begin{align}
\nonumber
&P=\left\{x\in E: \varphi(\pi_2(x))= id \mbox{ and } \pi_1(x)\in \left(0,\frac{p}{90(|E|+1)}\right)\right\}, \\
\label{def: splitSets}
&N=\left\{x\in E: \varphi(\pi_2(x))=id \mbox{ and } \pi_1(x)\in \left(-\frac{p}{90(|E|+1)},0\right)\right\}, \\
\nonumber
&Z=\left\{x \in E: \varphi(\pi_2(x)) = id \mbox{ and } \pi_1(x) = 0 \right\}, \\
\nonumber
&S=\{x\in E: \varphi(\pi_2(x))=-id\}.
\end{align}

Also, if $S$ is nonempty, we split it into $S=S_e\cup S_o$ such that $|S_e|=\lceil |S|/2\rceil$,  $|S_o|=\lfloor |S|/2\rfloor$, and 
\begin{equation}\label{eq: ordering}
\forall x\in S_e, \forall y\in S_o,\ \pi_1(x)\geq\pi_1(y).
\end{equation}
Finally, given an ordering $\bx=x_0,x_1,\dots,x_m$, we consider the set of partial products
$$\IS(\bx)=\left\{x_1 \cdot x_2 \cdots x_i: i\in [0,m]\right\}.$$
By the Structure Theorem \ref{prop:refined-structure}, we have that $\delta = (z_0, a_0)$ where $\varphi(a_0) = id$ and $z_0 \neq 0$.

We consider the case where $S$ is empty in the remark below.

\begin{remark}\label{rem: structureDirect}
According to Lemma \ref{absorptionlemma}, up to absorbing, in the proof of Theorem \ref{prop:refined-structure}, one element of $Z$ into the dissociated sets $D_j$s, we can impose that $\sum_{z \in Z} z \neq id$.
\end{remark}

Now we show that, for any family of subsets $Y_1, \ldots, Y_m\subseteq \mathbb{Z}_{p} \rtimes_{\varphi} H$, we can order $E$ as long as $S \neq \emptyset$ as follows:
\begin{proposition}\label{prop:ordering-P-N} Let $p$ be large enough. Consider $\delta=(z_0,0)\in \mathbb{Z}_{p} \rtimes_{\varphi} H$ where  $z_0>0$ and subsets $Y_1, \ldots, Y_m \subseteq \mathbb{Z}_{p} \rtimes_{\varphi} H$. Given a finite subset $E$ such that $\pi_1(E)\subseteq\left(-\frac{p}{90(|E|+1)},\frac{p}{90(|E|+1)}\right)\subseteq \mathbb{Z}_{p} \rtimes_{\varphi} H$,  consider $P,N,Z,S,S_e$ and $S_o$ defined as above where $S \neq \emptyset$.
 Then there are orderings $\bp$ of $P$, $\bn$ of $N$, $\mathbf{z}$ of $Z$, $\bs_e=(s_0,s_2,\dots,s_{2\ell})$ of $S_e$ (here $\ell= \lceil |S|/2\rceil-1$) and $\bs_o=(s_1,s_3,\dots,s_{2h-1})$ of $S_o$ (here $h=\lfloor |S|/2\rfloor$) such that, setting
\begin{equation}\label{eq: orderX}\bx= \mathbf{z}\ \bp\  s_0\  \bn\  s_1s_2\dots,s_{\ell+h},\end{equation}
we have that $\delta, \bx$ is a sequencing and
\begin{align}\label{ISp-bound}
|\delta\cdot\IS(\bx) \cap Y_j| \leq 4\inf_{L \in \mathbb{N}} \left(\frac{|H| |Y_j|}{L}+L+2|H|\sum_{i=1}^{j-1}|Y_i|\right) + |H|
\end{align}
for all $1 \leq j \leq m$.
\end{proposition}\begin{proof}
First, note that since $H$ is strongly sequenceable, there exists a sequencing $\mathbf{z}$ of $Z$. Then for such an ordering we can easily see that any ordering defined in \eqref{eq: orderX} is a sequencing for $E$.  Also, since $$\pi_1(E)\subseteq\left(-\frac{p}{90(|E|+1)},\frac{p}{90(|E|+1)}\right)\subseteq \mathbb{Z}_{p} \rtimes_{\varphi} H,$$ we can assume that the first components of $E$ are integers (positive for the elements in $P$ and negative for the elements in $N$).

Now, we will construct the sequences $\bx= \mathbf{z}\ \bp\  s_0\ \bn\  s_1s_2\dots,s_{\ell+h}= \mathbf{z}\ x_1x_2\dots x_{|P|+|N|+|S|}$ from the larger indices to the smaller indices (with respect to $\bx$). We set $x_i=(z_i,\epsilon_i)$ where $\epsilon_i\in \{0,1\}$ according to whether $x_i\in P\cup N$ or $x_i\in S$, respectively.

Here, we will divide the construction into four cases:
\begin{itemize}
\item[1)] if $k>|P|+|N|+1$ and $k-(|P|+|N|+1)$ is odd, we need to choose an $x_k\in S_o$;
\item[2)] if $k>|P|+|N|+1$ and $k-(|P|+|N|+1)$ is even or if $k=|P|+1$, we need to choose an $x_k\in S_e$;
\item[3)] if $|P|+|N|+1\geq k>|P|+1$, we need to choose an $x_k\in N$;
\item[4)] if $|P|+1>k$, we need to choose an $x_k\in P$.
\end{itemize}
After we have chosen $x_{|P|+|N|+|S|}, x_{|P|+|N|+|S|-1}, \dots, x_{k+1}$, we set:
$$S_o^k:=S_o\setminus\{x_{|P|+|N|+|S|}, x_{|P|+|N|+|S|-1}, \dots, x_{k+1}\},$$
 $$S_e^k:=S_e\setminus\{x_{|P|+|N|+|S|}, x_{|P|+|N|+|S|-1}, \dots, x_{k+1}\}, $$
$$N^k:=N\setminus\{x_{|P|+|N|+|S|}, x_{|P|+|N|+|S|-1}, \dots, x_{k+1}\},$$ 
 $$P^k:=P\setminus\{x_{|P|+|N|+|S|}, x_{|P|+|N|+|S|-1}, \dots, x_{k+1}\}, $$
to be the sets of remaining elements of $S_o, S_e, N$ and $P$ and we define the quantity
$$\nu_k:=\delta+\sum_{x\in P^k\cup S_e^k} x -\sum_{x\in N^k\cup S_o^k} x.$$
Note that, for any ordering $\bx=\mathbf{z}\ x_1x_2\dots x_{|P|+|N|+|S|}$, we have that $\nu_k= \delta \cdot \prod_{z \in Z} z \cdot \prod_{i=1}^k x_i$.

Suppose now we are in the first case. This means that we have already fixed 
$$x_{|P|+|N|+|S|}, x_{|P|+|N|+|S|-1}, \dots, x_{k+1}$$ alternatively in $S_e$ and in $S_o$ (according to the parity of $k-(|P|+|N|+1)$) and we need to choose $x_k\in S_o$.
Following the idea of Proposition 4.1 of \cite{BK}, we choose $x_k$ as follows:
\begin{enumerate}
\item If there is some $x^*\in S_o^k$ such that $\nu_k+x^* \notin \cup_j Y_j$, then choose $x_k$ to be this $x^*$ and say that the current step is a \emph{skip-step for $S_o$}.

\item  Now, consider the case where $\nu_k+S_o^k \subseteq \cup_j Y_j$.  Let $i$ be minimal such that $\nu_k+S_o^k$ intersects $Y_i$, and say that the current step is an \emph{$i$-step for $S_o$}.  If $\nu_k+S_o^k\subseteq Y_i$, then we set
$$x_k: \pi_1(x_k)=\min \{\pi_1(x): x\in S_o^k\}.$$
If $\nu_k+ S_o^k \not\subseteq Y_i$, then
we set
$$x_k: \pi_1(x_k)=\min \{\pi_1(x): x\in S_o^k \mbox{ and }\nu_k+x\not\in Y_i\}.$$
\end{enumerate}
In the second, third, and fourth cases, we choose $x_k \in S^k_e, N^k, P^k$, respectively, by the same method as in case 1 where $x_k$ is chosen as a minimum in the case of $N^k$ and as a maximum otherwise. 
Now we prove that the ordering $\bx$ satisfies
\begin{align*}
|\delta\cdot\IS(\bx) \cap Y_j| \leq 4\inf_{L \in \mathbb{N}} \left(\frac{|H| |Y_j|}{L}+L+2|H|\sum_{i=1}^{j-1}|Y_i|\right) + |H|\end{align*}
for all $1 \leq j \leq m$.

We split the set $\delta\cdot\IS(\bx)$ into the following subsets:
 $$\delta\cdot\IS^o(\bx)=\{\nu_\ell :\ell \in [ |N|+|P|+2 ,|N|+|P|+|S|] \mbox{ and }\ell-(|N|+|P|+|S|) \mbox{ is odd}\},$$
 $$\delta\cdot\IS^e(\bx)=\{\nu_\ell :\ell \in [ |N|+|P|+2 ,|N|+|P|+|S|] \mbox{ and }\ell-(|N|+|P|+|S|) \mbox{ is even}\}$$
$$\cup \{(\nu_\ell,1): \ell=|P|+1\},$$
 $$\delta\cdot\IS^p(\bx)=\{\nu_\ell :\ell \in [0 ,|P|]\},$$
 $$\delta\cdot\IS^n(\bx)=\{\nu_\ell:\ell \in [ |P|+2 ,|N|+|P|+1] \}.$$

Equation \eqref{ISp-bound} follows from $|\delta \cdot \IS(\mathbf{z})| \leq |H|$ and by showing that $|\delta\cdot\IS^{p}(\bx) \cap Y_j|$, $|\delta\cdot\IS^{n}(\bx) \cap Y_j|$, $|\delta\cdot\IS^{e}(\bx) \cap Y_j|$, $|\delta\cdot\IS^{o}(\bx) \cap Y_j|$ are upper bounded by $\inf_{L \in \mathbb{N}} \left(\frac{|H| |Y_j|}{L}+L+2|H|\sum_{i=1}^{j-1}|Y_i|\right)$. 
We will prove the statement of Equation \eqref{ISp-bound} (without the factor of four) only for $|\delta\cdot\IS^{o}(\bx) \cap Y_j|$ since the argument for $|\delta\cdot\IS^e(\bx) \cap Y_j|$ is essentially identical and that of $|\delta\cdot\IS^p(\bx) \cap Y_j|$ and $|\delta\cdot\IS^n(\bx) \cap Y_j|$ is identical to that of \cite{BK}.

Given some value of $\ell$, $\nu_\ell$ lie in $Y_j$ only when the choice of $x_{\ell+1}$ is a $j$-step or an $i$-step (for $S_e$) for some $i<j$.  We will estimate these two contributions separately.  Note that skip-steps and $i$-steps for $i>j$ never contribute.

We first consider the contribution of $j$-steps. Suppose that the choice of $x_{\ell+1}=(z_{\ell+1},a_{\ell+1})$ is a $j$-step for $S_e$ and $\nu_\ell=\nu_{\ell+1}-x_{\ell+1} \in Y_j$.  Then we must have
$$\nu_{\ell+1} - S_e^{\ell+1} \subseteq Y_j.$$
Since $ z_{\ell+1}$ is maximal in $ \pi_1(S_e^{\ell+1})$, given $z^*\in \pi_1(S_e^{\ell+1})$, we have that
$$ \pi_1(\nu_{\ell+1})- z^*\geq \pi_1(\nu_{\ell+1})- z_{\ell+1}=\pi_1(\nu_{\ell}).$$
Moreover,$$\pi_1(\nu_{\ell+2})=\pi_1(\nu_{\ell+1})-\pi_1(x_{\ell+2})=(\pi_1(\nu_{\ell+1})- z^*)+(z^*-\pi_1(x_{\ell+2}))$$
and hence due to Equation \eqref{eq: ordering} we have

$$\pi_1(\nu_{\ell+2}) \geq \pi_1(\nu_{\ell+1}) - z^*\geq \pi_1(\nu_{\ell}).$$
Due to the monotonicity of $\pi_1(\nu_{|P|+|N|+1+2t+1})$, setting $\ell=|P|+|N|+1+2t+1$, it follows that the $(t-1)/|H|$ elements of $\nu_{\ell+1}- S_e^{\ell+1} \setminus \{\nu_\ell\}$ can never appear in $\delta\cdot\IS^o(\bx)$.  In particular, from such $j$-steps with $t \geq L+1$ we obtain at most $|H||Y_j|/L$ elements of $\delta\cdot\IS^o(\bx) \cap Y_j$. From $j$-steps with $t \leq L$ we trivially obtain at most $L$ elements of $\delta\cdot\IS^o(\bx) \cap Y_j$.

We now consider the contribution of $i$-steps for $S_e$ with $i<j$.  We will bound this contribution by the total number of $i$-steps for $S_e$ with $i<j$.  We claim that the number of $i$-steps is at most $2|H||Y_i|$ for each $i$.  For each $i$-step $\ell+1$,
let $y(\ell)$ denote the smallest element of $\pi_1(\nu_{\ell+1})-\pi_1(S_e^{\ell+1} \cap Y_i)$.  It suffices to show that each $y \in \pi_1(Y_i)$ appears as $y(\ell)$ for at most $2|H|$ different $i$-steps (for $S_e$).

If $y(\ell)$ is not the smallest element of $\pi_1(\nu_{\ell+1})-\pi_1(S_e^{\ell+1})$, then it is distinct from $y(\ell')$ for all odd $\ell'<\ell$ since
$$y(\ell)>\pi_1(\nu_{\ell+1})-z_{\ell+1}=\pi_1(\nu_{\ell})$$
and, given $z'\in\pi_1(S_e^{\ell-1})$, by Equation \eqref{eq: ordering} we have

$$\pi_1(\nu_{\ell}) \geq \pi_1(\nu_{\ell})+(z_{\ell}-z')=\pi_1(\nu_{\ell-1})-z' \geq y(\ell-2).$$

Finally, set $\ell=|P|+|N|+1+2t+1$ and note that the smallest element of $\pi_1(\nu_{\ell+1})-\pi_1(S_e^{\ell+1})$ is nondecreasing as $t$ increases.
It follows that $y(\ell)$ is the smallest element of $\pi_1(\nu_{\ell+1})-\pi_1(S_e^{\ell+1})$ for at most $|H|$ values of $\ell$.
Summing up, we obtain that
$$|\delta\cdot\IS(\bx) \cap Y_j| \leq 4\inf_{L \in \mathbb{N}} \left(\frac{|H| |Y_j|}{L}+L+2|H|\sum_{i=1}^{j-1}|Y_i|\right) + |H|$$
for all $1 \leq j \leq m$. Since such an inequality holds also for $\delta\cdot\IS^{e}(\bx)$, $\delta\cdot\IS^{p}(\bx)$, and $\delta\cdot\IS^{n}(\bx)$, we get Equation \eqref{ISp-bound}.
\end{proof}

With essentially the same proof of Proposition \ref{prop:ordering-P-N} and thanks to Claims 4.2 and 4.3 of \cite{BK} we obtain the following proposition which shows that, for any family of subsets $Y_1, \ldots, Y_m, Y'_1, \ldots, Y'_m \subseteq \mathbb{Z}_p \rtimes_\varphi H$, we can order $E$ in the case of $S=\emptyset$ as follows:

\begin{proposition}\label{prop:ordering-P-N-S-empty} Let $p$ be large enough. Consider $\delta=(z_0,0)\in \mathbb{Z}_{p} \rtimes_{\varphi} H$ where  $z_0>0$ and subsets $Y_1, \ldots, Y_m, Y'_1, \ldots, Y'_m \subseteq \mathbb{Z}_{p} \rtimes_{\varphi} H$. Take a finite subset $E$ such that $\pi_1(E)\subseteq\left(-\frac{p}{90(|E|+1)},\frac{p}{90(|E|+1)}\right)\subseteq \mathbb{Z}_{p} \rtimes_{\varphi} H$ and consider $P,N,Z,S$ to be defined as above where $S = \emptyset$.
 Then there are orderings $\bp$ of $P$, $\bn$ of $N$, and $\mathbf{z}$ of $Z$ such that $\mathbf{z}, \overline{\bp}, \delta, \bn$ is a sequencing and
\begin{align}
\label{ISp-bound-D-1}
|\delta\cdot\IS(\bp) \cap Y_j| \leq \inf_{L \in \mathbb{N}} \left(\frac{|H| |Y_j|}{L}+L+2|H|\sum_{i=1}^{j-1}|Y_i|\right)\\
\label{ISp-bound-D-2}
|\delta\cdot\IS(\bn) \cap Y'_j| \leq \inf_{L \in \mathbb{N}} \left(\frac{|H| |Y'_j|}{L}+L+2|H|\sum_{i=1}^{j-1}|Y'_i|\right)
\end{align}
for all $1 \leq j \leq m$.
\end{proposition}

Now the proof proceeds by ordering the dissociated sets via a probabilistic procedure inspired by that of \cite{BK}. 
In particular, Lemma 5.1 of \cite{BK} allows us to split a dissociated set $D$, where the $H$ component of each element of $D$ is in $H_0$, into uniform random partitions $D^{(1)}, D^{(2)}, D^{(3)}$, and $D^{(4)}$. Consequently, we need only consider the case of dissociated sets $D$ with $H$ components in $H_1$. Note that we use the notation $\Omega$ as defined in \cite{BK}.

\begin{lemma} \label{lem: adapt-5.1}
Let $D \subseteq \mathbb{Z}_{p} \rtimes_{\varphi} H$ be a dissociated set  with $H$ components in $H_1$ and where $|D|$ is divisible by $8$. We split $D$ into $D^o=\{d_1^o,d_2^o\dots,d_h^o\}$ and $D^e=\{d_1^e,d_2^e\dots,d_h^e\}$ where $h=|D^o|=|D^e| = \frac{1}{2} |D|$. Then we let $D^o = D^{o,(1)} \cup D^{o,(2)} \cup D^{o,(3)} \cup D^{o,(4)}$ and $D^e = D^{e,(1)} \cup D^{e,(2)} \cup D^{e,(3)} \cup D^{e,(4)}$ be uniformly random partitions of $D^o$ and $D^e$, where each partition has the same cardinality. Then for every nonempty proper interval $I \subseteq \{1,2,3,4\}$ and each $x \in \mathbb{Z}_{p} \rtimes_{\varphi} H$, we have 
$$\mathbb{P} \left(\prod_{i \in I} \prod_{\substack{d^o \in D^{o,(i)} \\ d^e \in D^{e,(i)}}} d^o \cdot d^e  = x \right) \leq e^{-\Omega(|D|)}$$
\end{lemma}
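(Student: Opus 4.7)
The plan is to reduce the product in the lemma to an additive computation in $\mathbb{F}_p$ and then invoke the dissociation of $D$ via a counting argument. Since every element of $D$ has $\mathbb{F}_2$-component equal to $1$, a direct computation in $Dih_p$ gives $(y_1,1)\cdot(y_2,1)=(y_1-y_2,0)$, so each factor $d^o\cdot d^e$ lies in the abelian subgroup $\mathbb{F}_p\times\{0\}$. Because these factors commute, the whole product simplifies to
$$\prod_{i\in I}\prod_{d^o\in D^{o,(i)},\,d^e\in D^{e,(i)}} d^o\cdot d^e \;=\; \Bigl(\sum_{d\in T_o}\pi_1(d)-\sum_{d\in T_e}\pi_1(d),\,0\Bigr),$$
where $T_o:=\bigcup_{i\in I} D^{o,(i)}$ and $T_e:=\bigcup_{i\in I} D^{e,(i)}$. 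If $x=(x_0,a)$ with $a\neq 0$ the probability is zero, so from now on I would take $x=(x_0,0)$ and bound the probability that $\sum_{d\in T_o}\pi_1(d)-\sum_{d\in T_e}\pi_1(d)=x_0$. By symmetry of the two independent uniform random partitions, $T_o$ is uniformly distributed over the subsets of $D^o$ of size $k:=|I|\cdot h/4$, and independently $T_e$ is uniform over the subsets of $D^e$ of the same size $k$; hence each specific pair $(T_o,T_e)$ arises with probability exactly $\binom{h}{k}^{-2}$.

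The crucial step, which I expect to be the main content of the argument, is to show that at most one pair $(T_o,T_e)$ can realize any given value of $\sum_{d\in T_o}\pi_1(d)-\sum_{d\in T_e}\pi_1(d)$. Suppose $(T_o,T_e)$ and $(T_o',T_e')$ both do, and set $A:=T_o\setminus T_o'$, $B:=T_o'\setminus T_o$, $C:=T_e\setminus T_e'$, $E:=T_e'\setminus T_e$. Then $|A|=|B|$, $|C|=|E|$, and combining the two equalities yields $\sum_A\pi_1+\sum_E\pi_1=\sum_B\pi_1+\sum_C\pi_1$. Thus $S^+:=A\cup E$ and $S^-:=B\cup C$ are disjoint subsets of $D$ of equal size with equal $\pi_1$-sums. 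If $S^+\cup S^-\neq\emptyset$, then arranging the elements of $S^+$ at the odd positions and those of $S^-$ at the even positions of an alternating product yields the identity of $Dih_p$ (by the same computation as above), contradicting the dissociation of $D$; this is essentially the content of Lemma~\ref{distinct-products}. Hence $(T_o,T_e)=(T_o',T_e')$.

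Combining these steps, only one pair contributes to the event, so the desired probability is at most $\binom{h}{k}^{-2}$. Since $|I|\in\{1,2,3\}$ forces $k\in\{h/4,h/2,3h/4\}$, the standard entropy estimate gives $\binom{h}{k}\geq\binom{h}{h/4}=e^{\Omega(h)}=e^{\Omega(|D|)}$, which produces the desired bound $e^{-\Omega(|D|)}$.
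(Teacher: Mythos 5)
Your proof is correct and follows essentially the same strategy as the paper: the uniformly random partition makes every pair $(T_o,T_e)$ equally likely, distinct pairs yield distinct products, and there are $e^{\Omega(|D|)}$ such pairs, so each value has probability at most $e^{-\Omega(|D|)}$. The paper's proof asserts the distinctness of the products rather tersely, whereas your $S^+/S^-$ reduction makes the dissociation argument explicit; this is exactly the content the paper delegates to Lemma~\ref{distinct-products}.
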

\begin{proof}
Choose uniformly random partition $D^e = D^{e,(1)} \cup D^{e,(2)} \cup D^{e,(3)} \cup D^{e,(4)}$ where $|D^{e,(1)}| = |D^{e,(2)}| = |D^{e,(3)}| = |D^{e,(4)}| = \frac{|D|}{8}$ is an integer by assumption. Take a similar partition for $D^o$. For each $i \in \{1,2,3,4\}$, the $\binom{|D|/2}{|D|/8}^2$
possible values of $$\prod_{\substack{d^o \in D^{o,(i)} \\ d^e \in D^{e,(i)}}} d^o \cdot d^e $$ are achieved with equal probability, as are the $\binom{|D|/2}{|D|/4}^2$
possible values of $$\prod_{\substack{d^o \in D^{o,(i)} \cup D^{o,(j)} \\ d^e \in D^{e,(i)} \cup D^{e,(j)}}} d^o \cdot d^e$$ and the $\binom{|D|/2}{3|D|/8}^2$ possible values of $$\prod_{\substack{d^o \in D^{o,(i)} \cup D^{o,(j)} \cup D^{o, (k)} \\ d^e \in D^{e,(i)} \cup D^{e,(j)} \cup D^{e, (k)}}} d^o \cdot d^e.$$
Since $\binom{|D|/2}{|D|/8}, \binom{|D|/2}{|D|/4}$, and $\binom{|D|/2}{3|D|/8}$ are all $e^{\Omega(|D|)}$, their squares must also be $e^{\Omega(|D|)}$. The statement of the lemma follows from this.
\end{proof}

As in \cite{BK}, we reorder the dissociated sets as follows. For each dissociated set $D_j$ with $j\in L_1$, we randomly partition $D_j= \cup_{i=1}^4 D_j^{(i)}$ into four sets of equal size, where each $D_j^{(i)} = D_j^{o, (i)} \cup D_j^{e, (i)}$ as described in Lemma \ref{lem: adapt-5.1}. For each dissociated set $D_j$ with $j \in L_0$, we operate analogously to Lemma 5.1 of \cite{BK} in that we partition uniformly at random $D_j = \cup_{i=1}^4 D_j^{(i)}$ into four sets of equal size.
All these partitions are performed independently. Then we order these new dissociated sets as follows:
\begin{equation}\label{eq: new-order-dissociated}
D_1^{(1)}, D_1^{(2)}, D_2^{(1)}, D_2^{(2)}, \ldots, D_s^{(1)}, D_s^{(2)}, D_1^{(3)}, D_1^{(4)},D_2^{(3)}, D_2^{(4)}, \ldots, D_s^{(3)}, D_s^{(4)}\,.
\end{equation}
For notational convenience, we rename this new sequence as $T_1, T_2, \ldots, T_u$ where $u = 4s$, and we use the same notation $T_j^o$ and $T_j^e$ as used for the $D_j$s. We define $\tau_j := \prod_{d^o \in T_j^o, d^e \in T_j^e} d^o \cdot d^e$
if $T_j$ is a dissociated set associated to a $D_k$ with $k \in L_1$, otherwise we define it as $\tau_j := \prod_{d \in T_j} d$. We use $\mathbf{t}_i$ to denote an ordering of $T_i$ that is chosen according to a suitable distribution.
Let us also define 
\begin{equation}\label{eq: prod_subsets_dih}
\prod_{\leq m} (T_j) := \left\{ \prod_{d^o \in S^o, d^e \in S^e} d^o \cdot d^e : S^o \subseteq T_j^o, S^e \subseteq T_j^e, |S^o|= \lceil k/2 \rceil, |S^e|=\lfloor k/2 \rfloor, k \leq m\right\}
\end{equation}
if $T_j$ is a dissociated set associated to a $D_k$ with $k \in L_1$. Otherwise, we define it as
\begin{equation}\label{eq: prod_subsets}
	\prod_{\leq m} (T_j) := \left\{ \prod_{d \in S} d : S \subseteq T_j, |S| \leq m \right\}\,.
\end{equation}
Given a subset $S$ of a finite group, we define $(S)^{-1}$ to be the set $\{d^{-1} : d \in S\}$.

Recalling the definition given in \cite{BK} of \textit{Type I} and \textit{Type II} intervals, we say that a proper nonempty interval $I \subset [1, |A|]$ is of Type II if it contains between $K$ and $|T_j| - K$ elements of some $T_j$, where $K = c_2 R^{1/3}$ for a sufficiently small positive real $c_2$; otherwise we say that the interval is of Type I.

In the following lemma, we adapt Lemma 5.3 of \cite{BK} to the semidirect product case.

\begin{lemma}\label{lem: adapt-5.3}
Let $p$ be large enough and let $c>0$ and $1 \leq s \leq e^{c (\log p)^{1/4}}$. Take a sequence of dissociated sets $D_1, D_2, \ldots, D_s \subseteq \mathbb{Z}_{p} \rtimes_{\varphi} H$ each of size $\asymp R$ and a multiple of eight, where $H$ is abelian and $\varphi$ satisfies property $(\star)$. Let $\mathbf{x}_1, \mathbf{x}_2$ be sequences over $\mathbb{Z}_{p} \rtimes_{\varphi} H$ of length at most $e^{c (\log p)^{1/4}}$, and assume that $\mathbf{x}_1, \delta, \mathbf{x}_2$ is a sequencing. If the sequence $T_1, \ldots, T_u$ of dissociated sets is chosen randomly as described previously, then we have with positive probability that
\begin{enumerate}
\item[$(i)$] for each proper nonempty interval $I=[i,j] \subseteq [1,u]$, we have that
$$
	id \not \in \left( \prod_{\leq K} (T_{i-1}) \cup \left(\prod_{\leq K} (T_i) \right)^{-1} \right) \cdot \tau_i \cdots \tau_j \cdot  \left(\left(\prod_{\leq K} (T_{j})\right)^{-1} \cup \prod_{\leq K} (T_{j+1}) \right)
$$
with the convention that $T_0 = T_{u+1} = \emptyset$
\item[$(ii)$] for each $2 \leq j \leq u$, we have that
\begin{align*}
id \not \in \IS (\mathbf{x}_1) \cdot \tau_1 \cdots \tau_j \cdot \left(\left(\prod_{\leq K} (T_{j})\right)^{-1} \cup \prod_{\leq K} (T_{j+1}) \right) \\
id \not \in \IS (\mathbf{x}_2) \cdot \tau_u \cdots \tau_j \cdot \left(\left(\prod_{\leq K} (T_{j})\right)^{-1} \cup \prod_{\leq K} (T_{j-1}) \right)
\end{align*}
\item[$(iii)$] the ordering $\mathbf{x}_1, \tau_1, \ldots, \tau_u, \mathbf{x}_2$ is a sequencing
\end{enumerate}
\end{lemma}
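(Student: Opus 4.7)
The plan is to adapt the proof of Lemma~5.3 of \cite{BK} via a single union-bound argument that establishes (i), (ii), (iii) simultaneously. For each potential ``bad coincidence'' (a specific equation $\cdots = id$) we bound its probability using Lemma~\ref{lem: adapt-5.1}, and then verify that the sum over all such coincidences is $o(1)$ as $p\to\infty$, so that with positive probability none of them occurs.

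The key structural observation, absent in the abelian case of \cite{BK}, is that every $\tau_j$ lies in the abelian subgroup $\mathbb{F}_p \times \{0\} \subset Dih_p$: this is immediate when $T_j$ is constantly-zero, and when $T_j$ is constantly-one it follows because the product of the $|T_j|$ elements (an even number) of the form $(*,1)$ has $\mathbb{F}_2$-component $0$. Hence $\{\tau_j\}_{j=1}^{u}$ pairwise commutes, and for every interval $[i,j] \subseteq [1,u]$
$$
\tau_i \tau_{i+1} \cdots \tau_j \;=\; \prod_{\ell=1}^{s} \beta_\ell, \qquad \beta_\ell := \prod_{k \in I_\ell} \tau_{(\ell,k)},
$$
where $\tau_{(\ell,k)}$ corresponds to the piece $D_\ell^{(k)}$ and $I_\ell \subseteq \{1,2,3,4\}$ records the partition pieces of $D_\ell$ lying in $[i,j]$. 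Inspection of~\eqref{eq: new-order-dissociated} shows that every proper nonempty interval $[i,j] \subseteq [1,u]$ admits an index $\ell^\ast$ for which $I_{\ell^\ast}$ is a proper nonempty subset of $\{1,2,3,4\}$. Conditioning on the partitions of all $D_{\ell'}$ with $\ell'\ne \ell^\ast$, Lemma~\ref{lem: adapt-5.1} then gives $\Pr[\beta_{\ell^\ast} = x] \leqslant e^{-\Omega(R)}$ for every fixed $x\in Dih_p$.

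For~(i), each fixed bad event (an interval $[i,j]$ together with specific $u_1,u_2$) can, using the commutativity of the $\tau$'s, be rewritten as a single equation of the form $\beta_{\ell^\ast} = (\text{specific element})$, and so has conditional probability $\leqslant e^{-\Omega(R)}$. Union-bounding: the number of intervals is $\leqslant u^2$; a crude estimate gives $|\prod_{\le K}(T_\ell)| \leqslant (eR/K)^{O(K)} = e^{O(R^{1/3}\log R)}$ with $K=c_2 R^{1/3}$, and the same holds for the inverse set. Since $u\leqslant 4s \leqslant 4e^{c(\log p)^{1/4}}$ and $R \geqslant c_1 (\log p)^{1/2}$, the total contribution is at most $e^{O(R^{1/3}\log R) + O((\log p)^{1/4})}\cdot e^{-\Omega(R)} = o(1)$. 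Part~(ii) is entirely parallel, with one extra layer over $y\in \IS(\mathbf{x})$; the additional factor $|\IS(\mathbf{x})|\leqslant e^{c(\log p)^{1/4}}+1$ is absorbed into the same estimate.

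For~(iii), commutativity of the $\tau$'s gives $\tau_1\cdots\tau_u = \delta$, so the partial products of $\tau_1,\ldots,\tau_u,\mathbf{x}$ split into two blocks: those of the form $\tau_1\cdots\tau_i$ with $i\leqslant u$, and those of the form $\delta\cdot x_1\cdots x_k$ with $1\leqslant k\leqslant |\mathbf{x}|$. Non-identity and internal distinctness in the first block are the special cases of~(i) with $u_1=u_2=id$; in the second block they follow from $\delta,\mathbf{x}$ being a sequencing; and cross-distinctness is condition~(ii) with the boundary term set to $id$. The main obstacle is the arithmetic of the union bound: the subexponential factors $e^{O(R^{1/3}\log R)+O((\log p)^{1/4})}$ must be dominated by the Lemma~\ref{lem: adapt-5.1} decay $e^{-\Omega(R)}$, which is precisely what drives the choice $R\geqslant c_1(\log p)^{1/2}$ in~\eqref{Rdefinition}. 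A secondary subtlety is that the ``free'' set $D_{\ell^\ast}$ may coincide with a dissociated set whose partition contributes to $u_1$ or $u_2$; in that case one first conditions on the specific elements of $D_{\ell^\ast}$ appearing in $u_1,u_2$ (losing only another $(eR/K)^{O(K)}$ factor already absorbed) and then applies Lemma~\ref{lem: adapt-5.1} on the residual randomness.
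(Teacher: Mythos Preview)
Your proof is correct and follows essentially the same union-bound strategy as the paper's own argument, which simply invokes Lemma~\ref{lem: adapt-5.1} (together with Lemma~5.1 of \cite{BK}) to assert $\Pr[\prod_{i\in I}\tau_i = x]\leqslant e^{-\Omega(R)}$, observes $|\prod_{\le K}(T_j)|=e^{o(R)}$, and then union-bounds over intervals and over the factor $|\IS(\mathbf{x})|$; you have filled in the details the paper leaves implicit, notably the fact that every $\tau_j$ lies in the abelian subgroup $\mathbb{F}_p\times\{0\}$, which is what allows the product $\tau_i\cdots\tau_j$ to be regrouped so that Lemma~\ref{lem: adapt-5.1} applies to the block coming from a single $D_{\ell^\ast}$.

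One simplification worth noting: your ``secondary subtlety'' (the case where $D_{\ell^\ast}$ also supplies the piece $T_{i-1}$, $T_i$, $T_j$ or $T_{j+1}$ underlying $u_1$ or $u_2$) can be avoided entirely by union-bounding $u_1,u_2$ over the deterministic supersets $\prod_{\le K}(D_\ell)\supseteq\prod_{\le K}(T_\ell)$, which are still of size $e^{o(R)}$ and do not depend on the random partition; this removes any need to condition on which elements land in which piece before applying Lemma~\ref{lem: adapt-5.1}.
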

\begin{proof}
Thanks to Lemma \ref{lem: adapt-5.1} and Lemma 5.1 of \cite{BK}, we have that $\mathbb{P}(\prod_{i \in I} \tau_i = x) \leq e^{-\Omega(R)}$ for any proper nonempty subinterval $I \subset [1, u]$ and any element $x \in \mathbb{Z}_{p} \rtimes_{\varphi} H$. Observe that $(iii)$ holds whenever $(i)$ and $(ii)$ hold since we have assumed that $\mathbf{x}_1, \delta, \mathbf{x}_2$ is a sequencing. Then since $|T_j| \asymp R$ and for $K=o(R)$ we have $|\prod_{\leq K} (T_{j})| = e^{o(R)}$, it is clear that $(i)$ fails for a subinterval $I$ with probability at most $e^{-\Omega(R)}$. Hence, by the union bound over all the proper nonempty subintervals of $[1,u]$, it can be seen that the condition $(i)$ fails with probability $o(1)$ as $p \to \infty$. The proof of $(ii)$ can be easily derived by noticing that $|\IS(\mathbf{x}_1)|, |\IS(\mathbf{x}_2)| \leq e^{c (\log p)^{1/4}}$.
\end{proof}
We note that Lemma \ref{lem: adapt-5.3} handles all Type I intervals except the ones starting in the first $K$ elements of $\mathbf{t}_{2j}$ and ending in the last $K$ elements of $\mathbf{t}_{2j+1}$ and the ones starting in $\mathbf{x}$ and ending in the first $K$ elements of $\mathbf{t}_1$ or in the last $K$ elements of $\mathbf{t}_u$.

From the definitions in Equations \eqref{eq: prod_subsets_dih} and \eqref{eq: prod_subsets} we can derive in a natural way the definitions of $\prod_{=m} (T_j)$. Then, we can state the following lemma, which is an adaptation of Lemma 5.4~of~\cite{BK}.

\begin{lemma}\label{lem: adapt-5.4}
Let $T_1$ and $T_u$ be the two subsets chosen randomly as described previously. Then with positive probability we have for $h = 1, u$, that
\begin{equation}\label{eq: markov_1}
\left|\prod_{=j} (T_h) \cap \left( -\IS(\mathbf{x}_1) \cup \left(\delta \cdot \IS(\mathbf{x}_2)\right) \right)\right| \leq 4K \frac{\binom{|T_j^o|}{\lceil j/2 \rceil}\binom{|T_j^e|}{\lfloor j/2 \rfloor}}{\binom{|D_j^o|}{\lceil j/2 \rceil} \binom{|D_j^e|}{\lfloor j/2 \rfloor}} \left|\prod_{=j} (D_h) \cap \left( -\IS(\mathbf{x}_1) \cup \left(\delta \cdot \IS(\mathbf{x}_2)\right) \right)\right|\,
\end{equation}
if $T_h$ is a dissociated set whose $H$ components are in $H_1$. Otherwise
\begin{equation}\label{eq: markov_2}
\left|\prod_{=j} (T_h) \cap \left( -\IS(\mathbf{x}_1) \cup \left(\delta \cdot \IS(\mathbf{x}_2)\right) \right)\right| \leq 4K \frac{\binom{|T_j|}{j}}{\binom{|D_j|}{j}}\left|\prod_{=j} (D_h) \cap \left( -\IS(\mathbf{x}_1) \cup \left(\delta \cdot \IS(\mathbf{x}_2)\right) \right)\right|\,.
\end{equation}
\end{lemma}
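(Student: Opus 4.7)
The plan is to compute the expected cardinality of $\prod_{=j}(T_h) \cap (\delta \cdot \IS(\mathbf{x}))$ in closed form, and then combine Markov's inequality with a union bound over $h \in \{1, u\}$ and $j \in [1,K]$.

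I first establish that elements of $\prod_{=j}(D_h)$ admit a unique representation. In the constantly-one case, Lemma \ref{distinct-products} says precisely that the map
$$(S^o, S^e) \;\longmapsto\; \prod_{d^o \in S^o,\, d^e \in S^e} d^o \cdot d^e,$$
defined on pairs $S^o \subseteq D_h^o$, $S^e \subseteq D_h^e$ with $|S^o| = \lceil j/2 \rceil$, $|S^e| = \lfloor j/2 \rfloor$, is injective. In the constantly-zero case, the analogous injectivity of $S \mapsto \prod_{d \in S} d$ on $j$-subsets of $D_h$ is immediate from the dissociativity of $D_h$ inside the abelian subgroup $\{(x,0) : x \in \mathbb{F}_p\}$.

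Next I compute the expectation. For a fixed $y \in \prod_{=j}(D_h) \cap (\delta \cdot \IS(\mathbf{x}))$ with unique representative $(S^o, S^e)$, the event $y \in \prod_{=j}(T_h)$ is exactly $S^o \subseteq T_h^o$ and $S^e \subseteq T_h^e$. Since $T_h^o$ (respectively $T_h^e$) is a uniformly random subset of $D_h^o$ (respectively $D_h^e$) of prescribed size $|D_h^o|/4$, and the two choices are independent, this probability equals
$$\frac{\binom{|T_h^o|}{\lceil j/2\rceil}\binom{|T_h^e|}{\lfloor j/2\rfloor}}{\binom{|D_h^o|}{\lceil j/2\rceil}\binom{|D_h^e|}{\lfloor j/2\rfloor}},$$
with the analogous expression $\binom{|T_h|}{j}/\binom{|D_h|}{j}$ in the constantly-zero case. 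Summing over all such $y$ (using linearity of expectation, and injectivity to avoid over-counting) yields $\mathbb{E}\bigl|\prod_{=j}(T_h) \cap (\delta \cdot \IS(\mathbf{x}))\bigr|$ equal to exactly the right-hand sides of \eqref{eq: markov_1} and \eqref{eq: markov_2} divided by the factor $4K$.

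Finally, Markov's inequality gives that, for each fixed pair $(h,j)$, the desired bound fails with probability at most $1/(4K)$. A union bound over the $2K$ events indexed by $h \in \{1,u\}$ and $j \in [1,K]$ yields total failure probability at most $2K \cdot 1/(4K) = 1/2 < 1$, so with positive probability both bounds hold simultaneously for all admissible $j$ and $h$. The only non-routine step is the uniqueness-of-representation claim in the constantly-one case, which is exactly the content of Lemma \ref{distinct-products}; the remainder is a direct adaptation of Lemma 5.4 of \cite{BK}.
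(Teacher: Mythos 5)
Your proof is correct and follows the same Markov-plus-union-bound approach as the paper: the paper's proof simply asserts that the expectation of the left-hand side equals the right-hand side divided by $4K$ and then applies Markov and a union bound over $j\in[1,K]$, $h\in\{1,u\}$, whereas you spell out the two ingredients the paper leaves implicit, namely the unique-representation property (via Lemma~\ref{distinct-products} in the constantly-one case, dissociativity in the abelian case) and the hypergeometric identity $\binom{n-a}{t-a}/\binom{n}{t}=\binom{t}{a}/\binom{n}{a}$ that gives the stated binomial ratio. The extra detail is welcome but does not change the argument.
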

\begin{proof}
Here $T_1$ is chosen uniformly at random from all the subsets of $D_1$ with size equal to $|D_1|/4$ if $D_1$ is a dissociated set whose $H$ components are in $H_1$. Otherwise, $T_1$ is the union of two sets $T_1^o$ and $T_1^e$ where each of these sets is chosen uniformly at random from all the subsets of $D_1^o$ and $D_1^e$, respectively, with size equal to $|D_1|/8$. One can see that the expected value of the left-hand side of inequalities \eqref{eq: markov_1} and \eqref{eq: markov_2} is exactly the right-hand side of those equations without the factor $4K$. Therefore, by Markov's inequality, we see that bounds \eqref{eq: markov_1} and \eqref{eq: markov_2} fail with probability at most $1/4K$. The same argument applies also for $T_u$. Finally, we conclude the proof of this lemma by taking the union bound over all $j \in [1,K]$ and $h = 1,u$.
\end{proof}

Now, recalling that $K=c_2 R^{1/3}$, we set
\begin{align}\label{eq: defY}
    Y_j := \left( \prod_{=j} (D_s) \right)^{-1} \cup \left( \delta^{-1} \prod_{=j} (D_1) \right) \\ \label{eq: defY'}
     Y'_j := \left( \prod_{=j} (D_1) \right)^{-1} \cup \left( \delta^{-1} \prod_{=j} (D_s) \right)
\end{align}
for each $1\leq j \leq K$ and we apply Proposition \ref{prop:ordering-P-N}. This gives us an ordering of $\mathbf{x}$ such that $\delta, \mathbf{x}$ is a sequencing, and inequality \eqref{ISp-bound} holds. Then, using Lemmas \ref{lem: adapt-5.3} and \ref{lem: adapt-5.4}, we obtain dissociated sets $T_1, \ldots, T_u$ from $D_1, \ldots, D_s$ such that $A = P \cup N \cup Z \cup S \cup \left( \cup_{i=1}^u T_i \right)$ satisfy the conditions imposed in these lemmas. Fixing such a choice for $T_1, \ldots, T_u$, we will construct a sequencing of $A$ of the form $\mathbf{x}_1, \mathbf{t}_1, \ldots, \mathbf{t}_u, \mathbf{x}_2$, where the $\mathbf{t}_i$s are random orderings chosen with a suitable distribution described later. Now, we take care of the remaining Type I intervals left open by Lemmas \ref{lem: adapt-5.3} and \ref{lem: adapt-5.4}. Following the procedure of \cite{BK}, we pick the orders of each $\mathbf{t}_i$ uniformly at random on a restricted subset of the possible orderings.

We say that an ordering $t_1, t_2, \ldots, t_{|T_1|}$ of $T_1$ is \emph{acceptable} if 
$$t_1  \cdot t_2 \cdots t_k \not \in -\IS(\mathbf{x}_1) \cup \left(\delta \cdot \IS(\mathbf{x}_2)\right) \,$$
for every $1\leq k \leq K$, and we say that an ordering $t_1, t_2, \ldots, t_{|T_u|}$ of $T_u$ is acceptable if
$$
t_1  \cdot t_2 \cdots t_k \not \in -\IS(\mathbf{x}_2) \cup \left(\delta \cdot \IS(\mathbf{x}_1)\right) \,
$$
for every $1\leq k \leq K$. We can now state the following lemma, which is an adaptation of Lemma~6.1 of~\cite{BK}.

\begin{lemma}\label{lem: adapt-6.1}
If $T_1$ is a dissociated set whose $H$ components are in $H_1$, pick an ordering of $T_1$ uniformly at random from the orderings of $T_1$; otherwise pick the elements in odd positions of such an ordering uniformly at random from the orderings of $T_1^o$ and the elements in even positions uniformly at random from the orderings of $T_1^e$. Pick a random ordering of $T_u$ by the same process. Then these two orderings are acceptable with positive probability.
\end{lemma}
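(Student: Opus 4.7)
The plan is to follow the template of Lemma~6.1 of \cite{BK}, adapted to $Dih_p$. I would bound $\mathbb{P}(T_h\text{ is not acceptable})$ for each $h\in\{1,u\}$ by a union bound over $k\in[1,K]$, show each probability is strictly less than $1/2$ when $p$ is large, and conclude via the union bound that both orderings are acceptable with positive probability.

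For each such $h$ and $k$, I first compute $\mathbb{P}(t^{(h)}_1\cdots t^{(h)}_k\in\delta\IS(\mathbf{x}))$. When $\pi_2(D_h)=0$, the set $T_h\subseteq D_h$ sits in the abelian subgroup $\mathbb{F}_p\times\{0\}$ and is dissociated, so distinct $k$-subsets of $T_h$ yield distinct partial products; a uniformly random ordering picks $\{t^{(h)}_1,\dots,t^{(h)}_k\}$ uniformly among $k$-subsets, giving probability $|\prod_{=k}(T_h)\cap\delta\IS(\mathbf{x})|/\binom{|T_h|}{k}$. When $\pi_2(D_h)=1$, Lemma~\ref{distinct-products} provides the analogous distinct-products property for pairs $(S^o,S^e)$ of odd/even subsets of sizes $\lceil k/2\rceil$ and $\lfloor k/2\rfloor$; the structured random ordering makes such a pair uniform, so the probability is $|\prod_{=k}(T_h)\cap\delta\IS(\mathbf{x})|/\bigl[\binom{|T_h^o|}{\lceil k/2\rceil}\binom{|T_h^e|}{\lfloor k/2\rfloor}\bigr]$.

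I would then apply Lemma~\ref{lem: adapt-5.4} to pass from $T_h$ to $D_h$: the binomial ratios cancel to give
\[
\mathbb{P}\!\bigl(t^{(h)}_1\cdots t^{(h)}_k\in\delta\IS(\mathbf{x})\bigr)\le \frac{4K\,\bigl|\prod_{=k}(D_h)\cap\delta\IS(\mathbf{x})\bigr|}{|\prod_{=k}(D_h)|}.
\]
The inclusions $\delta^{-1}\prod_{=k}(D_1)\subseteq Y_k$ and $(\prod_{=k}(D_s))^{-1}\subseteq Y_k$ from \eqref{eq: defY}, combined with Proposition~\ref{prop:ordering-P-N} applied to $Y_1,\dots,Y_K$, then bound the numerator by $4\inf_L\bigl(|Y_k|/L+L+2\sum_{i<k}|Y_i|\bigr)$. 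Choosing $L=\lfloor\sqrt{|Y_k|}\rfloor$ and using $|Y_k|\le 2|\prod_{=k}(D_h)|$ together with the geometric bound $\sum_{i<k}|Y_i|=O(|Y_{k-1}|)$ (valid since $k\le K\ll |D_h|/2$), the per-$k$ probability becomes $O\!\bigl(K/\sqrt{|\prod_{=k}(D_h)|}\bigr)+O(Kk/|D_h|)$.

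Summing over $k\in[1,K]$ with $|D_h|=\Theta(R)$ and $K=c_2R^{1/3}$, the first contribution sums to $O(K/\sqrt{R})=O(R^{-1/6})=o(1)$ as $p\to\infty$, while the second contributes $O(K^3/R)=O(c_2^3)$. Taking $c_2$ sufficiently small makes the latter $<1/4$, so for $p$ large each $\mathbb{P}(T_h\text{ not acceptable})<1/2$ and both orderings are acceptable with positive probability. The main obstacle I expect is the step that bounds $|\prod_{=k}(D_h)\cap\delta\IS(\mathbf{x})|$ using the Proposition~\ref{prop:ordering-P-N} estimate: because $Dih_p$ is non-abelian, one must track carefully which side $\delta$ is multiplied from when passing between $\prod_{=k}(D_h)$ and $Y_k$, and for $h=u$ one must use the $(\prod_{=k}(D_s))^{-1}$ component of $Y_k$ via an additional inversion. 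Keeping $c_2$ tight enough to drive $O(K^3/R)$ below $1/4$ while respecting the Structure Theorem constraints is a secondary technical matter.
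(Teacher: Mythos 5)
Your proposal follows the paper's proof closely: it uses Lemma~\ref{distinct-products} to identify the law of $t_1\cdots t_k$ as uniform on $\prod_{=k}(T_h)$, passes to $D_h$ via Lemma~\ref{lem: adapt-5.4} with the binomials cancelling, bounds the numerator via Proposition~\ref{prop:ordering-P-N} with the same choice $L=\lfloor\sqrt{|Y_k|}\rfloor$, and concludes by a union bound over $k\in[1,K]$. One small improvement over the paper's exposition: the paper asserts a uniform per-$k$ bound $O(c_2^3K^{-1})$, but for $k=1$ the $|Y_1|^{1/2}=\Theta(\sqrt{R})$ term only yields $\mathbb{P}(t_1\in\delta\IS(\mathbf{x}))=O(K/\sqrt{R})=O(R^{-1/6})$, which exceeds $O(c_2^3R^{-1/3})$; your splitting of the per-$k$ bound into $O\bigl(K/\sqrt{|\prod_{=k}(D_h)|}\bigr)+O(Kk/|D_h|)$ and summing each piece separately cleanly absorbs this small-$k$ contribution as $o(1)$. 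Your flagged concern about the $\delta$-normalization and the inversion needed for $T_u$ (via the $(\prod_{=k}(D_s))^{-1}$ half of $Y_k$) is the same bookkeeping that the paper's definition~\eqref{eq: defY} is designed to handle, and it is resolved identically in both arguments.
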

\begin{proof}
    We prove the statement only for $T_1$ since the proof is analogous for $T_u$. Let $\mathbf{t}_1 = t_1, \ldots, t_{|T_1|}$ be a random ordering of $T_1$. If $T_1$ is a dissociated set whose $H$ components are in $H_1$ then $\mathbf{t}_1$ is chosen uniformly at random from the orderings of $T_1$; otherwise $t_1, t_3, \ldots$ are chosen uniformly at random from the orderings of $T_1^o$ and $t_2, t_4, \ldots$ from the orderings of $T_1^e$.  We want to prove that $$\mathbb{P}\left(t_1 \cdots t_k \in  -\IS(\mathbf{x}_1) \cup \left(\delta \cdot \IS(\mathbf{x}_2)\right) \right) \leq \frac{1}{100K}\,$$ 
    for each $1\leq k \leq K$. For a fixed value of $k$, since $T_1$ is a dissociated set, Lemma \ref{distinct-products} tells us that the products $t_1 \cdots t_k$ are uniformly distributed over the set $\prod_{=k} (T_1)$. Given the definition of the sets $Y_j$s in \eqref{eq: defY}, we see that $|Y_j| = 2 \left| \prod_{= j} (D_1) \right|$ since $|D_1| = |D_s|$ and $D_1\cup D_s\cup \{\delta\}$ is dissociated by condition $(v)$ of the Structure Theorem. Then, by Proposition \ref{prop:ordering-P-N}, taking $L=\lfloor |H|^{1/2}|Y_k|^{1/2} \rfloor$ and $K=c_2 R^{1/3}$, we have that
    \begin{align}\label{eq: UpperSumDk}
    \nonumber
    \left| \prod_{=k} (D_1) \cap  \left(-\IS(\mathbf{x}_1) \cup \left(\delta \cdot \IS(\mathbf{x}_2)\right)\right)\right| &= O\left( |H|^{1/2}|Y_k|^{1/2} + |H|\sum_{i=1}^{k-1} |Y_i| + |H|\right) \\ \nonumber &= O\left( \left| \prod_{=k} (D_1) \right| \cdot \frac{K}{|D_1|} \cdot |H| \right) \\ &= O\left(\left| \prod_{=k} (D_1)\right| \cdot |H| \cdot c_2^3 K^{-2} \right)\,.
\end{align}
Since the right-hand side of Equation \eqref{eq: markov_2} is an upper bound for the right-hand side of Equation \eqref{eq: markov_1}, by Lemma~\ref{lem: adapt-5.4} and Equation \eqref{eq: UpperSumDk} we obtain
$$
    \left| \prod_{=k} (T_1) \cap \left(-\IS(\mathbf{x}_1) \cup \left(\delta \cdot \IS(\mathbf{x}_2)\right)\right) \right| = O\left( 4K \left| \prod_{=k} (T_1) \right| \cdot |H| \cdot c_2^3 K^{-2}  \right) = O\left(\left| \prod_{=k} (T_1)\right| \cdot |H| \cdot c_2^3 K^{-1} \right) \,.
$$
Then, it follows that
$$
    \mathbb{P}\left( t_1 \cdots t_k \in -\IS(\mathbf{x}_1) \cup \left(\delta \cdot \IS(\mathbf{x}_2)\right)\right) = O\left(|H| \cdot c_2^3 K^{-1} \right)\,.
$$
Therefore, for a sufficiently small real $c_2$ that depends on $|H|$, we have that $\mathbb{P}\left( t_1 \cdots t_k \in \delta \cdot \IS(\mathbf{x})\right) \leq 1/(100K)$. The theorem follows by the union bound over $1 \leq k \leq K$ since
$$
    \mathbb{P}\left(\cup_{k=1}^K t_1 \cdots t_k \in -\IS(\mathbf{x}_1) \cup \left(\delta \cdot \IS(\mathbf{x}_2)\right)\right) \leq \sum_{k=1}^K \mathbb{P}\left(t_1 \cdots t_k \in -\IS(\mathbf{x}_1) \cup \left(\delta \cdot \IS(\mathbf{x}_2)\right)\right) \leq \frac{1}{100}\,.
$$
\end{proof}

Following \cite{BK}, we choose $\mathbf{t}_1, \mathbf{t}_u$ independently such that $\mathbf{t}_1, \mathbf{t}_u$ are acceptable orderings (here $\overline{\mathbf{t}}_u = t_{|T_1|}, \ldots, t_1$ denotes the reverse of $\mathbf{t}_u$). This choice takes care of Type I intervals with one endpoint in $\mathbf{x}$ and the other one in the first $K$ elements of $\mathbf{t}_1$ or in the last $K$ elements of $\mathbf{t}_u$. Now, we choose the pair of orderings $\mathbf{t}_{2j}, \mathbf{t}_{2j+1}$ for $j \in [1, u/2-1]$ as follows: given a partial ordering $t_1, \ldots, t_k$ of $T_{2j}$ where if $T_{2j}$ is a dissociated set whose $H$ components are in $H_1$ then $t_1, t_3, \ldots$ is a partial ordering of $T_{2j}^o$ and $t_2, t_4, \ldots$ is a partial ordering of $T_{2j}^e$, and given a partial ordering $t'_1, \ldots, t'_{\ell}$ of $T_{2j+1}$ by the same process, we say that the pair of partial orderings is \emph{permissible} if $t_1 \cdots t_i \cdot t'_1 \cdots t'_s \neq id$ for all $(i,s)$. We let $t_1,t_2,\ldots, t_K$ and $t'_1, t'_2, \ldots, t'_K$ be a uniformly random permissible pair of orderings, then we choose $\mathbf{t}_{2j}$ to be a uniformly random ordering of $T_{2j}$ if $T_{2j}$ is a dissociated set whose $H$ components are in $H_0$. Otherwise, the elements in odd positions are picked uniformly at random from $T_{2j}^o$ and the elements in even positions from the orderings of $T_{2j}^e$ conditional on $\overline{\mathbf{t}}_{2j}$ starting with $t_1, \ldots, t_K$. Moreover, $\mathbf{t}_{2j+1}$ is picked with the same process conditional on $\mathbf{t}_{2j+1}$ starting with $t'_1, \ldots, t'_K$. These choices are made independently for each $j$ and from the choices of $\mathbf{t}_1, \mathbf{t}_u$.

As done in \cite{BK}, we can see that also in our case, for any $1 \leq j \leq u/2-1$ and any orderings of $\mathbf{t}_{2j}$ and $\mathbf{t}_{2j+1}$, we have that
\begin{align}
\nonumber
    &\mathbb{P}\left( \mathbf{t}_{2j+1} = t'_1, \ldots, t'_{|T_{2j+1}|} \mid \overline{\mathbf{t}}_{2j} = t_1, \ldots, t_{|T_{2j}|} \right)  = O\left( \frac{1}{|T_{2j+1}|!} \right)\\
    &\mathbb{P}\left(\mathbf{t}_{2j} = t_1, \ldots, t_{|T_{2j}|} \mid \mathbf{t}_{2j+1} = t'_1, \ldots, t'_{|T_{2j+1}|} \right)  = O\left( \frac{1}{|T_{2j}|!} \right)\,.
\end{align}

We note that this choice on the pairs $\mathbf{t}_{2j}, \mathbf{t}_{2j+1}$ handles Type I intervals starting in the first $K$ elements of $\mathbf{t}_{2j}$ and ending in the last $K$ elements of $\mathbf{t}_{2j+1}$.

Now, if we choose randomly the orderings $\mathbf{t}_1, \mathbf{t}_2, \ldots, \mathbf{t}_u$ as described above, proceeding as in Lemma 6.3 of \cite{BK}, we have with high probability that the ordering
$$
\mathbf{x}_1, \mathbf{t}_1, \mathbf{t}_2, \ldots, \mathbf{t}_u, \mathbf{x}_2\,,
$$
is a sequencing whenever $|\mathbf{x}_1|,|\mathbf{x}_2| \leq e^{c (\log p)^{1/4}}$ and $p$ is sufficiently large. More precisely, we can state the following theorem:

\begin{theorem}\label{lem: adapt-5.5}
Let $H$ be a finite abelian and strongly sequenceable group.
Let $c>0$ and let us consider $\mathbb{Z}_{p} \rtimes_{\varphi} H$ where $\varphi$ satisfies property $(\star)$ and $p$ is large enough depending on $H$.
Let $D_1, D_2, \ldots, D_s \subseteq \mathbb{Z}_{p} \rtimes_{\varphi} H$ be dissociated sets, each of size $\asymp R$ and a multiple of eight where $1 \leq s \leq e^{c (\log p)^{1/4}}$.

Defining $\delta$ as in Theorem \ref{prop:refined-structure}(iv), we assume that $D_1, D_s$, and $\delta$ satisfy condition $(v)$ of that theorem.  Now we consider sequences $\mathbf{x}_1, \mathbf{x}_2$ over $\mathbb{Z}_{p} \rtimes_{\varphi} H$ of length at most $e^{c (\log p)^{1/4}}$ such that $\mathbf{x}_1, \delta, \mathbf{x}_2$ is a sequencing satisfying Proposition \ref{prop:ordering-P-N} when $\mathbf{x}_1 = \emptyset$ or Proposition \ref{prop:ordering-P-N-S-empty} when $\mathbf{x}_1 \neq \emptyset$ with respect to the sets $Y_j$s defined in Equation \eqref{eq: defY} for $\mathbf{x}_1 = \emptyset$ and with respect to the sets $Y_j$s and $Y'_j$s defined in Equations \eqref{eq: defY} and \eqref{eq: defY'} for $\mathbf{x}_1 \neq \emptyset$.

Then it is possible to choose the sequence $T_1, \ldots, T_u$ of dissociated sets and the orderings $\mathbf{t}_1,\dots, \mathbf{t}_u$ so that each $\mathbf{t}_i$ is a sequencing and so is
$$\mathbf{x}_1, \mathbf{t}_1,\ldots, \mathbf{t}_u,\mathbf{x}_2.$$
\end{theorem}

Now, we obtain the main result of our work.
\begin{mythm}{\ref{thm:main2}}
\emph{Let $H$ be a finite abelian and strongly sequenceable group. There exists a $c > 0$ such that every subset $A \subseteq \mathbb{Z}_p \rtimes_{\varphi} H \setminus \{id\}$, where $\varphi: H \to Aut(\mathbb{Z}_p)$ satisfies $\varphi(h) \in \{id, -id\}$ for each $h \in H$, of size
$$ |A| \leq e^{c(\log p)^{1/4}},$$
has a sequencing.}
\end{mythm}

\begin{proof}
    We assume, up to automorphism of $\mathbb{Z}_{p} \rtimes_{\varphi} H$, that $$A = E \cup(\cup_{j=1}^{s} D_j),$$ where the sets $D_j$s and $E$ are defined according to Theorem \ref{prop:refined-structure}. If $A=E$ (i.e. $s=0$) then Theorem \ref{thm:main2} follows immediately from Proposition \ref{prop:ordering-P-N}. Otherwise, we have $s>0$ and, up to changing all the signs of the $\mathbb{Z}_p$ components, we can assume that $z_0> 0$ and $\varphi(a_0) = id$ for the $\delta=(z_0,a_0)$ defined in Theorem \ref{prop:refined-structure}(iv). As done in Equation \eqref{def: splitSets}, we can split the set $E$ into four sets $P, N, Z$ and $S$. Now, if $S \neq \emptyset$ then we apply Proposition \ref{prop:ordering-P-N} to obtain an ordering $\mathbf{x}$ of $E$ such that $\delta, \mathbf{x}$ is a sequencing satisfying Condition \eqref{ISp-bound} with respect to the sets $Y_j$s defined in Equation \eqref{eq: defY}, while for the case $S = \emptyset$, we apply Proposition \ref{prop:ordering-P-N-S-empty} to obtain orderings $\mathbf{z}$ of $Z$, $\bp$ of $P$, and $\bn$ of $N$ such that $\mathbf{z}, \overline{\bp}, \delta, \bn$ is a sequencing satisfying Conditions \eqref{ISp-bound-D-1} and \eqref{ISp-bound-D-2} with respect to the sets $Y_j$s and $Y'_j$s defined in Equations \eqref{eq: defY} and \eqref{eq: defY'}.
    
    Then, let $d_1 > 0$ and $\bar{p}$ such that Theorem \ref{lem: adapt-5.5} holds for $c = d_1$ and every $p \geq \bar{p}$. Hence for all $p \geq \bar{p}$, we have that all the subsets $A \subseteq \mathbb{Z}_p \rtimes_{\varphi} H \setminus \{id\}$ of size $|A| \leq e^{d_1 (\log p)^{1/4}}$ are sequenceable. For $p < \bar{p}$, we set $d_2 = \log 2 / (\log \bar{p})^{1/4}$ to obtain the upper bound $|A| \leq e^{d_2 (\log p)^{1/4}} < 2$. Clearly, all the subsets $A \subseteq \mathbb{Z}_p \rtimes_{\varphi} H \setminus \{id\}$ of size at most $1$ are sequenceable. Hence, the statement of the theorem follows by taking $c = \min\{d_1, d_2\}$.
\end{proof}

\section{Acknowledgements}
The first and second authors were partially supported by INdAM--GNSAGA.
The authors would also like to thank Noah Kravitz and Benjamin Bedert for useful discussions on this topic.


\begin{thebibliography}{99}

\bibitem{AKP} B. Alspach, D. L. Kreher, and A. Pastine. The Friedlander–Gordon–Miller conjecture is true, Australas. J. Combin. 67 (2017), 11--24.

\bibitem{AL20}
B.~Alspach and G.~Liversidge, On strongly sequenceable abelian groups, {\em Art Discrete Appl. Math.}, 3 (2020), 19pp.

\bibitem{ADMS16}
D.~S.~Archdeacon, J.~H.~Dinitz, A.~Mattern, and D.~R.~Stinson, On partial sums in cyclic groups, {\em J.~Combin.~Math.~Combin.~Comput.}, 98 (2016), 327--342.

\bibitem{BK} B. Bedert and N. Kravitz, Graham's rearrangement conjecture beyond the rectification barrier, \texttt{arXiv:2409.07403}.

\bibitem{BFMPY}
M. Bucić,  B. Frederickson, A. Müyesser, A. Pokrovskiy and L. Yepremyan, Towards Graham's rearrangement conjecture via rainbow paths, \texttt{arXiv:2503.01825}.

\bibitem{CDF}
S.~Costa and S.~Della Fiore,
Weak Freiman isomorphisms and sequencings of small sets, \texttt{arXiv:2407.15785}

\bibitem{CDOR}
S.~Costa, S.~Della Fiore, M.~A.~Ollis and S.~Z.~Rovner-Frydman,
On Sequences in Cyclic Groups with Distinct Partial Sums, {\em Electron. J. Combin.}, 29 (2022), \#P3.33.

\bibitem{CMPP18}
S.~Costa, F.~Morini, A.~Pasotti and M.~A.~Pellegrini, A problem on partial sums in abelian groups, {\em Discrete Math.}, 341 (2018), 705--712.

\bibitem{EG} P. Erd\H{o}s and R. L. Graham, Old and new problems and results in combinatorial number theory. L’Enseignement mathematique (1980), Universite de Geneve.

\bibitem{GR} R. L. Graham, On sums of integers taken from a fixed sequence, J. H. Jordan, W. A. Webb (eds.), Proceedings of the Washington State University Conference on Number Theory, 1971, pp. 22--40.

\bibitem{NK} N. Kravitz, Rearranging small sets for distinct partial sums, \textit{Integers: Electronic Journal of Combinatorial Number Theory}, 24 (2024).

\bibitem{OllisSurvey}
M.~A. Ollis, Sequenceable groups and related topics, {\em Electron. J. Combin.}, DS10 (2002, updated~2013), 34pp.

\bibitem{OllisDi} M.~A.~Ollis, Sequences in dihedral groups with distinct partial products, {\em Australas.~J. Combin.}, 78 (2020), 35--60.

\bibitem{PD}
A. Pasotti and J. H. Dinitz, A survey of Heffter arrays. {\em Fields Inst. Commun.}, 86 (2024), 353--392.

\bibitem{Sawin}
W. Sawin, comment on the post “Ordering subsets of the cyclic group to give distinct partial sums”. MathOverflow (2015), https://mathoverflow.net/q/202857.

\end{thebibliography}
\end{document}